\newcommand{\BN}{\mathbb N}			\newcommand{\BQ}{\mathbb Q}
			\newcommand{\BZ}{\mathbb Z}
\newcommand{\CC}{\mathcal C}
		\newcommand{\CP}{\mathcal P}
		\newcommand{\CT}{\mathcal T}
\newcommand{\DP}{{\mathcal {DP}}}
\newcommand{\CCP}{{\mathcal {CP}}}
\newcommand{\DZ}{{\mathcal{D}\mathbb Z}}
\newcommand{\teichmuller}{Teichm\"uller }
\newtheorem*{namedtheorem}{\theoremname}
\newcommand{\theoremname}{testing}
\newenvironment{named}[1]{\renewcommand{\theoremname}{#1}\begin{namedtheorem}}{\end{namedtheorem}}
\newtheorem{conjecture}{Conjecture}[section]
\newtheorem{corollary}[conjecture]{Corollary}
\newtheorem{lemma}[conjecture]{Lemma}
\newtheorem{proposition}[conjecture]{Proposition}
\newtheorem{remark}[conjecture]{Remark}
\newtheorem{theorem}[conjecture]{Theorem}
\begin{document}

\title[]{Convexity of strata in diagonal pants graphs of surfaces}
\author{J. Aramayona, C. Lecuire, H. Parlier \& K. J. Shackleton}

\address{School of Mathematics, Statistics and Applied Mathematics, 
 NUI Galway, Ireland}
\email{javier.aramayona@nuigalway.ie}
\urladdr{http://www.maths.nuigalway.ie/$\sim$javier/}

\address{Laboratoire Emile Picard
Universit\'e Paul Sabatier, Toulouse, France}
\email{lecuire@math.ups-tlse.fr}
\urladdr{http://www.math.univ-toulouse.fr/$\sim$lecuire/}

\address{Department of Mathematics, University of Fribourg, Switzerland}
\email{hugo.parlier@unifr.ch}
\urladdr{http://homeweb.unifr.ch/parlierh/pub/}

\address{University of Tokyo IPMU\\ Japan 277-8583}
\email{kenneth.shackleton@impu.jp}
\urladdr{http://ipmu.jp/kenneth.shackleton/}

\begin{abstract}
We prove a number of convexity results for strata of the diagonal pants graph of a surface, in analogy with the extrinsic geometric properties of strata in the Weil-Petersson completion. As a consequence, we exhibit convex flat subgraphs of every possible rank inside the diagonal pants graph. 
\end{abstract}

\maketitle

\section{Introduction}

Let $S$ be a connected orientable surface, with empty boundary and negative Euler characteristic. The {\em pants graph}
$\CP(S)$ is the graph whose vertices correspond to homotopy classes of pants decompositions of $S$, and where two vertices are adjacent
if they are related by an elementary move; see Section \ref{prelim} for an expanded definition. The graph $\CP(S)$ is connected, and becomes a geodesic metric space by declaring each edge to have length 1.

A large part of the motivation for the study of $\CP(S)$ stems from the result of Brock \cite{brock} which asserts that  $P(S)$ is quasi-isometric to $\CT(S)$, the \teichmuller space of $S$ equipped with the Weil-Petersson metric. As such, $P(S)$ (or any of its relatives also discussed in this article) is a combinatorial model for Teichm\"uller space.

By results of Wolpert \cite{wolpert} and Chu \cite{chu}, the space $\CT(S)$ is not complete. Masur \cite{masur}  proved that its completion
${\hat \CT}(S)$ is homeomorphic to the {\em augmented \teichmuller space} of $S$, obtained from  $\CT(S)$ by extending Fenchel-Nielsen
coordinates to admit zero lengths. The completion ${\hat \CT}(S)$ admits a natural {\em stratified structure}: each stratum $\CT_C(S) \subset {\hat \CT}(S)$ corresponds to a multicurve $C \subset S$, and parametrizes surfaces with {\em nodes} exactly at the elements of $C$. 
 By Wolpert's  result \cite{wolpert2} on the convexity of length functions, $\CT_C(S)$ is convex in  ${\hat \CT}(S)$ for all multicurves $C \subset S$.

The pants graph admits an analogous stratification, where the stratum $\CP_C(S)$ corresponding to the multicurve $C$ is the subgraph of $\CP(S)$ spanned by those pants decompositions that contain $C$.
Moreover, Brock's quasi-isometry between $\CP(S)$ and $\CT(S)$ may be chosen so that the image of $\CP_C(S)$ is contained in $\CT_C(S)$. 

In light of this discussion, it is natural to study which strata of $\CP(S)$ are convex. This problem was addressed in \cite{APS1, APS2}, where certain families of strata in $\CP(S)$ were proven to be totally geodesic; moreover,
it is conjectured that this is the case for all strata of $\CP(S)$, see Conjecture 5 of \cite{APS1}. As was observed by Lecuire, the validity of this conjecture is equivalent to the existence of only finitely many geodesics between any pair of vertices of $\CP(S)$; we will give a proof of the equivalence of these two problems in the Appendix.

The main purpose of this note is to study the extrinsic geometry of strata in certain graphs of pants decompositions closely related to $\CP(S)$, namely the {\em diagonal pants graph} $\DP(S)$ and the {\em cubical pants graph}.  Concisely,  $\DP(S)$ (resp. $\CCP(S)$)  is obtained from $\CP(S)$ by adding an edge of length 1 (resp. length $\sqrt{k}$) between any two pants decompositions that differ by $k$ {\em disjoint} elementary moves.  Note that Brock's result \cite{brock} implies that both $\DP(S)$ and $\CCP(S)$ are quasi-isometric to $\CT(S)$, since they are quasi-isometric  to $\CP(S)$. These graphs have recently arisen in the study of metric properties of moduli spaces; indeed, Rafi-Tao \cite{RaTa} use $\DP(S)$ to estimate the  Teichm\"uller diameter of the thick part of moduli space, while $\CCP(S)$ has been 
used by Cavendish-Parlier to give bounds on the Weil-Petersson diameter of moduli space.

As above, given a multicurve $C\subset S$, denote by $\DP_C(S)$ (resp. $\CCP_C(S))$ the subgraph of $\DP(S)$  (resp. $\CCP(S)$) spanned by those pants decompositions which contain $C$. Our first result is:

\begin{theorem}\label{thm:punctures}
Let $S$ be a sphere with punctures and $C \subset S$ a multicurve. Then $\DP_C(S)$ 
is convex in $\DP(S)$. 
\end{theorem}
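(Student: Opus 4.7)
\noindent\emph{Proof proposal.} The plan is a reduction to a single separating curve, followed by a shortening argument exploiting the simple combinatorial structure of pants decompositions of a punctured sphere.

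Since in any graph the intersection of convex subgraphs is convex, and $\DP_C(S)=\bigcap_{c\in C}\DP_{\{c\}}(S)$, it suffices to prove convexity of $\DP_{\{c\}}(S)$ for a single simple closed curve $c$. Because $S$ is a sphere with punctures, such a $c$ is necessarily separating, so $S\setminus c=S_1\sqcup S_2$ with each $S_i$ a punctured sphere of strictly smaller complexity; moreover, every $4$-holed sphere arising in an elementary move on a pants decomposition of $S$ is a genuine $4$-holed sphere (never a $1$-holed torus), which rigidifies the combinatorics considerably.

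Suppose for contradiction that there is a $\DP(S)$-geodesic $P_0,P_1,\dots,P_d$ with $c\in P_0\cap P_d$ but $c\notin P_j$ for some intermediate $j$. Pick a maximal sub-interval $(a,b)$ with $c\in P_a\cap P_b$ and $c\notin P_i$ for $a<i<b$. At the step $P_a\to P_{a+1}$, the curve $c$ is one of a set of pairwise disjoint elementary moves, and its flip takes place in the unique $4$-holed sphere $F\subset S$ bounded by four curves of $P_a\setminus\{c\}$; analogously at the step $P_{b-1}\to P_b$. The aim is to produce a strictly shorter path from $P_a$ to $P_b$ lying entirely inside $\DP_{\{c\}}(S)$, contradicting geodesicity. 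I would carry this out via the standard dictionary identifying pants decompositions of the punctured sphere with triangulations of a convex polygon (the punctures corresponding to the polygon vertices): elementary moves become diagonal flips, simultaneous disjoint elementary moves correspond to flips of edges lying in disjoint quadrilaterals, and $\DP_{\{c\}}(S)$ corresponds to the $1$-skeleton of the associahedron facet determined by $c$, enriched with those square diagonals that remain inside the facet.

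The shortening is then carried out by cancelling the flip of $c$ from the disjoint set of flips realising $P_a\to P_{a+1}$ against the flip reintroducing $c$ at $P_{b-1}\to P_b$, and repackaging the remaining flips, possibly together with those of the intermediate steps, into a shorter $\DP$-path inside $\DP_{\{c\}}(S)$. \textbf{The main technical obstacle} is this repackaging when the two $4$-holed spheres in which $c$ is first removed and then reinstated do not coincide, so that the two $c$-flips do not cancel directly; handling this case requires a careful analysis of the square $2$-faces of the associahedron that straddle the facet determined by $c$, together with an inductive application of the theorem to the simpler subsurfaces $S_1$ and $S_2$. The punctured-sphere hypothesis enters crucially through the rigidity that any $4$-holed subsphere is uniquely determined by its four boundary curves inside the ambient pants decomposition, which makes this case analysis tractable.
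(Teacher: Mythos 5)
There is a genuine gap, and it occurs at both structural steps of your plan. First, your reduction to a single curve rests on the claim that an intersection of convex subgraphs is convex. That is true for \emph{totally geodesic} subgraphs, but the notion of convexity at stake here is the weaker one (for any two vertices of the subgraph, \emph{some} ambient geodesic between them lies in the subgraph, equivalently the subgraph is isometrically embedded), and for that notion the claim is false: in a $4$-cycle $a\,b\,c\,d$, the paths $a\,b\,c$ and $a\,d\,c$ are each convex, yet their intersection $\{a,c\}$ is not. One cannot upgrade to total geodesicity to rescue the reduction, because strata of $\DP(S)$ are genuinely not totally geodesic (Remark \ref{rmk:outside}).

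Second, and more seriously, your core argument aims to show that any geodesic excursion $P_a,\dots,P_b$ outside $\DP_{\{c\}}(S)$ can be replaced by a \emph{strictly shorter} path, contradicting geodesicity. This would prove that $\DP_{\{c\}}(S)$ is totally geodesic, which is false even for a single curve $c$ on a punctured sphere: the construction of Remark \ref{rmk:outside} (with $C=\alpha\cup\beta$) produces a genuine geodesic $P=P_1,P_2',\dots,P_{n-1}',P_n=Q$ whose endpoints contain $\beta$ but whose interior vertices do not; taking $c=\beta$, this is a geodesic between two vertices of $\DP_{\{c\}}(S)$ leaving the stratum, and since it is a geodesic no strictly shorter path from $P_a=P$ to $P_b=Q$ exists. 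So the contradiction you seek cannot be derived, and no amount of associahedron combinatorics in the ``repackaging'' step will fix this. What \emph{can} be shown is the weaker statement that there is a path inside the stratum of length \emph{at most} $b-a$; the paper gets this globally in one stroke by building a $1$-Lipschitz retraction $\phi_C:\DP(S)\to\DP_C(S)$ from forgetful maps (forgetting the punctures of $S$ lying outside each component of $S\setminus C$), which fixes $\DP_C(S)$ pointwise and sends any path between $P,Q\in\DP_C(S)$ to one in $\DP_C(S)$ of no greater length. If you want to salvage your outline, you should replace both the intersection step and the shortening-by-contradiction step with such a nonexpansive projection.
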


We remark that, in general, strata of $\DP(S)$ are not totally geodesic; see Remark \ref{rmk:outside3} below.

The proof of Theorem \ref{thm:punctures} will follow directly from properties of the {\em forgetful maps} between graphs of pants decompositions. In fact, the same techniques will allow us to prove the following more general result:

\begin{theorem}
Let $Y$ be an essential subsurface of a surface $S$ such that $Y$ has the same genus as $S$.
Let $C$ be the union of a pants decomposition of $S \setminus Y$ with all the boundary
components of $Y$. Then:

\begin{enumerate}
\item $\DP_C(S)$ is convex in $\DP(S)$.
 \item If $Y$ is connected, then $\CP_C(S)$ and $\CCP_C(S)$ are totally geodesic inside $\CP(S)$ and $\CCP(S)$, respectively.
\end{enumerate}
\label{main2}
\end{theorem}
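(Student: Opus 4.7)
The plan is to reduce both parts to Theorem \ref{thm:punctures} via forgetful maps. Because $Y$ has the same genus as $S$, every connected component $\Sigma_j$ of $S\setminus Y$ is planar; a short Euler-characteristic computation in fact forces $n_Y$ (the number of boundary circles of $Y$) to equal the number of components of $S\setminus Y$, so each $\Sigma_j$ has exactly one boundary circle and is a punctured disk. Consequently every curve $c\in C$ is separating in $S$, and one of its two complementary pieces, call it $\Sigma(c)$, is planar: either a sub-sphere of $\Sigma_j$ when $c$ is an interior curve of the prescribed pants decomposition $C\cap\Sigma_j$, or $\Sigma_j$ itself when $c\in\D Y$. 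Since intersections of convex subsets of a geodesic metric space are convex, and $\DP_C(S)=\bigcap_{c\in C}\DP_c(S)$, part~(1) is reduced to the single-curve statement ``$\DP_c(S)$ is convex in $\DP(S)$'' for each $c\in C$, and similarly for $\CP$ and $\CCP$ in part~(2).

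For the core step I would construct, for each such $c$, a 1-Lipschitz forgetful map $\rho_c:\DP(S)\to\DP(\hat\Sigma(c))$, where $\hat\Sigma(c)$ is the sphere with punctures obtained from $\Sigma(c)$ by collapsing $c$ to a puncture. On pants decompositions containing $c$ the map $\rho_c$ is just restriction to $\Sigma(c)$ followed by the collapsing; in general, $\rho_c$ is defined by a standard surgery of the curves of the pants decomposition along $c$, in the same spirit as the forgetful maps used in the proof of Theorem~\ref{thm:punctures}. Given $\rho_c$, convexity of $\DP_c(S)$ follows from a standard argument: separation by $c$ provides an isometric embedding $\DP_c(S)\cong\DP(\hat\Sigma(c))\times\DP(\widehat{S\setminus\Sigma(c)})\hookrightarrow\DP(S)$ with the $\ell^\infty$-product metric on the left, and any $\DP(S)$-geodesic between two elements of $\DP_c(S)$ projects via $\rho_c$ to a path in $\DP(\hat\Sigma(c))$ of matching length; an application of Theorem~\ref{thm:punctures} in $\hat\Sigma(c)$ then forces the projection, and hence the original geodesic, to remain inside $\DP_c(S)$.

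For part~(2), the same strategy applies to $\CP$ and $\CCP$, using the single-curve $\CP$- and $\CCP$-convexity results for spheres established in \cite{APS1, APS2} in place of Theorem~\ref{thm:punctures}. The connectedness hypothesis on $Y$ is needed here because, for disconnected $Y$, disjoint moves in different components of $Y$ combine into a single diagonal edge of $\DP$ but require separate edges in $\CP$ and $\CCP$, which would spoil the 1-Lipschitz property of $\rho_c$ in those latter two graphs; when $Y$ is connected, no such ambiguity arises and the forgetful maps interact cleanly with the single-move structure of $\CP$ and $\CCP$. The principal obstacle in the whole plan is the construction of $\rho_c$ and the verification of its 1-Lipschitz property, which requires a delicate case analysis of how elementary (or diagonal) moves behave under the surgery when their support straddles the separating curve $c$.
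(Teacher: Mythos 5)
Your reduction to single curves breaks down at the very first step. You assert that part (1) follows because ``intersections of convex subsets of a geodesic metric space are convex'' and $\DP_C(S)=\bigcap_{c\in C}\DP_c(S)$. With the notion of convexity in play here (\emph{some} geodesic between any two points of the subset stays in the subset), this is false: $\DP_{c_1}(S)$ may contain one geodesic from $P$ to $Q$ and $\DP_{c_2}(S)$ a different one, with neither lying in the intersection. Intersections behave well only for \emph{totally geodesic} subsets; but total geodesicity of $\DP_c(S)$ for a single curve fails in general (see Remark \ref{rmk:outside}), and total geodesicity of $\CP_c(S)$ for an arbitrary separating curve $c$ is precisely the open Conjecture 5 of \cite{APS1} --- it is not among the results of \cite{APS1,APS2}, which treat only special families of strata (e.g.\ deficiency-one multicurves). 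So part (2) cannot be obtained by intersecting known single-curve results either. Finally, your core step is circular: the isometric $\ell^\infty$-product embedding $\DP_c(S)\cong \DP(\hat\Sigma(c))\times\DP(\widehat{S\setminus\Sigma(c)})\hookrightarrow \DP(S)$ is essentially equivalent to the convexity you are trying to establish, and Theorem \ref{thm:punctures} --- which only asserts the \emph{existence} of a geodesic inside a stratum of a punctured sphere --- cannot ``force'' a given geodesic of $\DP(S)$ to remain in $\DP_c(S)$.

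The paper's proof avoids all of this by projecting onto the whole stratum at once rather than curve by curve. Since $Y$ carries all the genus, each component of $\partial Y$ bounds a punctured disc in $S$ (your Euler-characteristic observation is correct for connected $Y$), so there is a single forgetful map $\phi_Y:\CP(S)\to\CP(Y)$, and $\phi_C(P):=\phi_Y(P)\cup C$ is a distance non-increasing retraction of $\DP(S)$ onto $\DP_C(S)$; applying it to any path between two vertices of the stratum gives part (1) exactly as in the proof of Theorem \ref{thm:punctures}. For part (2) one observes in addition that if a geodesic $P_0,\dots,P_n$ leaves $\CP_C(S)$ at the first step, then the elementary move from $P_0$ to $P_1$ must destroy a curve of $C$, which lives outside $Y$, so $\phi_C(P_1)=P_0$; the projected path is therefore strictly shorter, a contradiction. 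Connectedness of $Y$ enters only to make $\phi_Y$ a single forgetful map, hence 1-Lipschitz for $\CP$ and $\CCP$ and not merely for $\DP$ (your intuition about disjoint moves collapsing into one diagonal edge is in the right spirit). If you want to salvage your plan, replace the curve-by-curve intersection with this one global projection.
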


Next, we will use an enhanced version of the techniques in \cite{APS1} to show an analog of Theorem \ref{thm:punctures} for
general surfaces, provided the multicurve $C$ has {\em deficiency} 1; that is, it has one curve less than a pants decomposition. 
Namely, we will prove:

\begin{theorem}\label{thm:general}
Let $C\subset S$ be a multicurve of deficiency $1$. Then:
\begin{enumerate}
\item  $\DP_C(S)$
is convex in $\DP(S)$. 
\item  $\CCP_C(S)$ 
is totally geodesic in $\CCP(S)$. 
\end{enumerate}
\end{theorem}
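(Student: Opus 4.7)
The plan is to exploit the rigid combinatorial structure of a deficiency-one multicurve. Since $|C| = 3g - 3 + n - 1$, the complement $S \setminus C$ consists of pairs of pants together with exactly one subsurface $Y$ of complexity one, which is therefore either a one-holed torus or a four-holed sphere. Every pants decomposition extending $C$ has the form $P = C \cup \{d\}$ for a unique essential simple closed curve $d \subset Y$, and the assignment $P \mapsto d$ yields an isometric graph isomorphism $\DP_C(S) \cong F(Y)$, where $F(Y)$ is the Farey graph of $Y$; the same identification is isometric for $\CCP_C(S)$, since two adjacent vertices in the stratum differ by a single elementary move supported inside $Y$ and so carry $\CCP$-weight one.

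The main tool I would introduce is the subsurface projection $\pi_Y : \DP(S) \to F(Y)$, defined by isotoping $P$ into minimal position with $\partial Y$ and surgering the arc components of $P \cap Y$ into simple closed curves on $Y$; for $P \in \DP_C(S)$ this is simply the $Y$-curve of $P$. The core technical step is a \emph{sharp} one-Lipschitz estimate: if $P, P'$ span an edge of $\DP(S)$, then $d_{F(Y)}(\pi_Y(P), \pi_Y(P')) \leq 1$, and analogously $\leq \sqrt{k}$ for an edge of weight $\sqrt{k}$ in $\CCP(S)$. The argument proceeds by case analysis on the pairwise-disjoint elementary moves comprising the edge: since $Y$ has complexity one, at most one such move can be supported inside $Y$, and the remaining moves occur either in pants components of $S \setminus Y$ or modify a curve of $\partial Y$, affecting $\pi_Y$ only through the surgery of new arcs. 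Exploiting the minimality of each move within its hosting pants-pair together with the complexity-one geometry of $Y$, each constituent move contributes at most one Farey step.

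Granting this sharp estimate, let $P_0, \ldots, P_n$ be any geodesic in $\DP(S)$ with $P_0, P_n \in \DP_C(S)$ and corresponding $Y$-curves $d_0, d_n$. Summation along the edges forces $d_{F(Y)}(d_0, d_n) \leq n$, while the reverse inequality is immediate from the isometric embedding $\DP_C(S) \hookrightarrow \DP(S)$, so the projected vertices $\pi_Y(P_i)$ trace a Farey geodesic. To upgrade this into the full convexity statement --- that each $P_i$ actually contains every curve of $C$ --- I would bring in the annular subsurface projections $\pi_c$ for $c \in C$: any vertex $P_i \not\supset c$ would force a nontrivial round-trip excursion in the annulus of $c$, and an analogous one-Lipschitz bound on $\pi_c$ would then add to the already-saturated $F(Y)$-budget, contradicting minimality of the geodesic. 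The cubical variant follows from the same framework using the $\sqrt{k}$ edge weights. The main obstacle, and precisely the point at which the techniques of \cite{APS1} require genuine enhancement, is establishing the \emph{sharp} (rather than merely coarse) Lipschitz constants for $\pi_Y$ and $\pi_c$ across the new edges of $\DP(S)$ and $\CCP(S)$ supported on several simultaneous disjoint elementary moves, particularly moves acting on curves of $\partial Y$ that effectively enlarge the projection target.
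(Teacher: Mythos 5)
Your overall skeleton (identify the unique complexity-one piece $F$ of $S\setminus C$, identify $\DP_C(S)$ with the Farey graph of $F$, and project paths via subsurface projection) is the paper's strategy, but your central technical claim is false, and the paper itself exhibits the counterexample. You assert a \emph{sharp} per-edge bound $d_{F(Y)}(\pi_Y(P),\pi_Y(P'))\le 1$ for an edge of $\DP(S)$. When $F$ is a four-holed sphere and the two exchanged curves meet $F$ in seams, a single elementary move can carry an arc $a$ to an arc $a'$ whose surgered curves are at Farey distance $3$ (see the remark following Lemma \ref{lem:mainlemma} and Figure \ref{fig:lemproof1-2}); moreover $\pi_Y(P)$ is only coarsely well defined, since the distinct arcs of $P\cap F$ already project to a set of diameter up to $2$ (Lemma \ref{lem:disjoint}). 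The paper's replacement for your Lipschitz estimate is Lemma \ref{lem:mainlemma}: for an elementary move either the new curve projects within distance $2$ of the chosen arc, or else some curve of $P\cap P'$ carries an arc projecting within distance $1$; the induction in Theorem \ref{thm:themeat} then tracks a carefully chosen arc at each stage and sometimes advances two steps at a time, achieving an \emph{average} of one Farey step per edge together with a saved step at each end (because $\pi_C(P_1)=P_0$ when $P_0\supset\partial F$ and $P_1\not\supset\partial F$), which is what yields the strict inequality needed for part (2).

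Your proposed ``upgrade'' is also aimed at a false statement: you want to show that every vertex $P_i$ of a geodesic between points of the stratum contains every curve of $C$, i.e.\ that $\DP_C(S)$ is totally geodesic. Remark \ref{rmk:outside3} shows this fails whenever $\kappa(S)\ge 3$, so the annular-projection argument you sketch cannot close (and convexity does not require it: once one knows that every path of length $n$ between $P,Q\in\DP_C(S)$ dominates a path of length at most $n$ inside the stratum, lifting a Farey geodesic already produces a geodesic of $\DP(S)$ contained in $\DP_C(S)$). What a correct proof must deliver, and what your plan currently lacks, is (i) a mechanism to cope with the distance-$3$ jumps of the seam--seam configuration, and (ii) the strictness of the length comparison for any path leaving the locus $\{\partial F\subset P_k\}$, which is where the totally geodesic conclusion for $\CCP_C(S)$ comes from.
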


We observe that  part (2) of Theorem \ref{thm:general} implies the main results in \cite{APS1}. 

\medskip

We now turn to discuss some applications of our main results. The first one concerns the existence of convex flat subspaces of $\DP(S)$ of every possible rank. Since $\DP(S)$ is quasi-isometric to $\CP(S)$, the results of  Behrstock-Minsky \cite{bemi}, Brock-Farb \cite{brfa} and Masur-Minsky \cite{MM2} together yield that $\DP(S)$ admits a quasi-isometrically embedded copy of $\BZ^r$ if and only if $r\le [\frac{3g+p-2}{2}]$. If one considers ${\hat T}(S)$ instead of $\DP(S)$, one obtains {\em isometrically} embedded copies of $\BZ^r$ for the exact same values of $r$. In \cite{APS2}, convex copies of $\BZ^2$ were exhibited in $\CP(S)$, and it is unknown whether higher rank convex flats appear. The corresponding question for the diagonal pants graph is whether there exist convex copies of $\BZ^r$ with a modified metric which takes into account the edges added to obtain $\DP(S)$; we will denote this metric space $\DZ^r$. Using Theorems \ref{thm:punctures} and \ref{main2}, we will obtain a complete answer to this question, namely:

\begin{corollary}\label{cor:punctures}
There exists an isometric embedding $\DZ^r \to \DP(S)$ if and only if $r \le [\frac{3g+p-2}{2}]$.
\end{corollary}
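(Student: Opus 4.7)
The plan is to prove the two directions separately. The ``only if'' direction is immediate from the paragraph preceding the corollary: an isometric embedding $\DZ^r \hookrightarrow \DP(S)$ is in particular a quasi-isometric embedding of $\BZ^r$ (the $L^1$ and $L^\infty$ metrics on $\BZ^r$ being bi-Lipschitz equivalent), and since $\DP(S)$ is quasi-isometric to $\CP(S)$, the bound $r \le [\frac{3g+p-2}{2}]$ follows from the cited results of Behrstock--Minsky, Brock--Farb and Masur--Minsky.

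For the ``if'' direction I would first produce the combinatorial data. Suppose $r \leq [\frac{3g+p-2}{2}]$. The plan is to find a multicurve $C_0 \subset S$ whose complement $S \setminus C_0$ is the disjoint union of $r$ essential subsurfaces $Y_1,\ldots,Y_r$ of complexity $1$ (one-holed tori or four-holed spheres) together with a (possibly empty) collection of pairs of pants. To achieve the bound, I would use exactly $g$ one-holed tori (absorbing all the genus of $S$) and $r-g$ four-holed spheres; a direct Euler-characteristic count shows that the remaining space can be filled with pairs of pants precisely when $r \leq [\frac{3g+p-2}{2}]$.

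Set $Y := Y_1 \sqcup \cdots \sqcup Y_r$. Then $Y$ is an essential (possibly disconnected) subsurface of $S$ whose genus is $g$, and by construction $C_0$ is the union of $\partial Y$ with a pants decomposition of $S \setminus Y$. Theorem \ref{main2}(1) therefore applies and yields that $\DP_{C_0}(S)$ is convex in $\DP(S)$. The next step is to identify this stratum: pants decompositions of $S$ containing $C_0$ are in bijection with $r$-tuples $(\gamma_1,\ldots,\gamma_r)$ with $\gamma_i$ a curve in $Y_i$, and since the $Y_i$ are pairwise disjoint, any simultaneous flip of curves in a subset of them is a single disjoint elementary move, hence a single edge of $\DP(S)$. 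This identifies $\DP_{C_0}(S)$ isometrically with the product $\prod_{i=1}^r \CC(Y_i)$ equipped with the $L^\infty$ metric $d((\gamma_i),(\delta_i)) = \max_i d_{\CC(Y_i)}(\gamma_i,\delta_i)$.

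Finally, each $\CC(Y_i)$ is isometric to the Farey graph, which contains bi-infinite geodesics (for instance the axes of pseudo-Anosov elements of the mapping class group of $Y_i$). Choosing one such geodesic in each $\CC(Y_i)$ and taking the product yields an isometric embedding $\DZ^r = (\BZ^r, d_\infty) \hookrightarrow \prod_i \CC(Y_i) \cong \DP_{C_0}(S)$. Combining with the convexity of $\DP_{C_0}(S)$ in $\DP(S)$ promotes this to an isometric embedding $\DZ^r \hookrightarrow \DP(S)$. The main obstacle is the combinatorial first step: one must verify that for every $r$ up to the claimed bound one can actually realise $r$ disjoint complexity-one subsurfaces of the prescribed types inside $S$ with pants filling the rest. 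The remaining steps, namely the identification of the stratum with a product of Farey graphs and the appeal to Theorem \ref{main2}(1), are then essentially formal.
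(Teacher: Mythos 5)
Your overall architecture (a multicurve whose complement contains $r$ disjoint complexity-one pieces, the identification of the resulting stratum with the $L^\infty$ product of $r$ Farey graphs, and the product of bi-infinite Farey geodesics) matches the paper, and your ``only if'' direction via the quasi-isometric rank results is a valid, if slightly different, packaging of the paper's converse. The gap is in the one step that does the real work: the convexity of the stratum $\DP_{C_0}(S)$. You invoke Theorem \ref{main2}(1), but that theorem is proved using the forgetful-map projection $\phi_Y$, which is defined by choosing one puncture of $S$ inside each punctured disc bounded by a component of $\partial Y$; it therefore only makes sense when $S\setminus Y$ is a disjoint union of punctured discs. For $Y=Y_1\sqcup\cdots\sqcup Y_r$ with $r\ge 2$ this typically fails: a boundary component of $Y_1$ separates $Y_1$ from a region containing $Y_2$ (and possibly genus), not from a punctured disc. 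The extreme case is a closed surface, e.g.\ genus $2$, where $R=2$ and the two pieces are one-holed tori with annular complement: there are no punctures to forget, so no forgetful map exists at all, yet the corollary still asserts an isometric copy of $\DZ^2$. So Theorem \ref{main2}(1) cannot carry the ``if'' direction in general.

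The paper closes exactly this gap with Proposition \ref{prop:rank} (convexity of $\DP_D(S)$ for a rank-realizing multicurve $D$), whose proof does not use forgetful maps: it projects an arbitrary path onto each complexity-one piece $X_i$ using the subsurface projections of Section 4, where Theorem \ref{thm:themeat} (resting on Lemmas \ref{lem:disjointT}--\ref{lem:mainlemma}) is what guarantees that the projected paths do not increase in length. Your argument as written is complete for punctured spheres --- where you could in fact cite Theorem \ref{thm:punctures} directly --- but for surfaces with genus (and more generally whenever some component of $\partial Y$ does not cut off a punctured disc) you need the subsurface-projection machinery, and without it the ``if'' direction is unproved. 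The remaining ingredients of your proposal (the Euler-characteristic count, the $L^\infty$ product structure, the existence of bi-infinite Farey geodesics) are correct.
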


Our second application concerns the {\em finite geodesicity} of different graphs of pants decompositions. Combining Theorem \ref{thm:punctures} with the main results of \cite{APS1, APS2}, we will obtain the following:

\begin{corollary}
Let $S$ be the six-times punctured sphere, and let $P,Q$ be two vertices of  $\CP(S)$. Then there are only finitely many geodesics in  $\CP(S)$ between $P$ and $Q$.
\label{geodesics}
\end{corollary}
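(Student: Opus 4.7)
The plan is to invoke the equivalence proved in the Appendix, due to Lecuire: $\CP(S)$ has only finitely many geodesics between any pair of vertices if and only if every stratum of $\CP(S)$ is convex in $\CP(S)$. It therefore suffices to verify that every stratum of $\CP(S_{0,6})$ is convex. Pants decompositions of $S_{0,6}$ contain exactly three curves, so any multicurve $C\subset S_{0,6}$ satisfies $|C|\in\{0,1,2,3\}$. The cases $|C|=0$ and $|C|=3$ are trivial (the stratum is all of $\CP(S_{0,6})$ or a single vertex), and the case $|C|=2$ (deficiency one) is the main result of \cite{APS1}. Only the case $|C|=1$ remains; up to the mapping class group, the single curve $\gamma=C$ separates the punctures either into sets of sizes $2$ and $4$ or into two sets of size $3$.

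In the $2|4$ case, one side of $\gamma$ is already a pair of pants and the other is a five-holed sphere, so $\CP_\gamma$ is naturally identified with $\CP(S_{0,5})$. In any pants decomposition of $S_{0,5}$ the two pants curves lie in overlapping four-holed-sphere subsurfaces --- they share the central pair of pants --- so no pair of elementary moves within $\CP_\gamma$ is disjoint. Consequently $\DP_\gamma$ and $\CP_\gamma$ agree as subgraphs of $\DP(S_{0,6})$. Theorem \ref{thm:punctures} therefore gives $d_{\DP}(P,Q)=d_{\DP_\gamma}(P,Q)=d_{\CP_\gamma}(P,Q)$ for all $P,Q\in\CP_\gamma$, and combined with the trivial chain $d_{\DP}\le d_{\CP}\le d_{\CP_\gamma}$ this forces $d_{\DP}=d_{\CP}$. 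Hence every $\CP$-geodesic from $P$ to $Q$ is also a $\DP$-geodesic, and a second application of Theorem \ref{thm:punctures} shows that it lies entirely in $\DP_\gamma=\CP_\gamma$.

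In the $3|3$ case, $S_{0,6}\setminus\gamma$ is a disjoint union of two four-holed spheres and $\CP_\gamma$ is naturally identified with a product of two Farey graphs under the $\ell_1$ metric. Given $P,Q\in\CP_\gamma$, I would choose a bi-infinite Farey geodesic in each factor passing through the corresponding coordinates of $P$ and $Q$; the product of these two geodesics is a subgraph of $\CP_\gamma$ isomorphic to $\mathbb Z^2$ and containing both $P$ and $Q$. By the main result of \cite{APS2} this $\mathbb Z^2$ is convex in $\CP(S_{0,6})$, so every $\CP$-geodesic from $P$ to $Q$ stays inside it and a fortiori inside $\CP_\gamma$.

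The hard part is matching each stratum type to the appropriate convexity statement and, in the $2|4$ case, bridging $\DP$-convexity and $\CP$-convexity via the coincidence $\DP_\gamma=\CP_\gamma$. The $3|3$ case falls outside the reach of Theorem \ref{thm:punctures} --- here $\DP_\gamma$ genuinely carries extra diagonal edges not present in $\CP_\gamma$, and the shortcut used in the $2|4$ case is unavailable --- and is precisely the situation that the convex flats of \cite{APS2} are designed to handle.
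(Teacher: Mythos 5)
Your overall skeleton --- reduce via Proposition \ref{prop:cyrilito} and then check the single-curve strata type by type --- is the paper's, but the reduction requires those strata to be \emph{totally geodesic} (every geodesic between two of their vertices stays inside), not merely \emph{convex} (some geodesic stays inside), and this distinction is exactly where your argument breaks in both remaining cases. In the $2|4$ case, your chain of equalities $d_{\DP}=d_{\CP}=d_{\CP_\gamma}$ on pairs of vertices of $\CP_\gamma$ is correct and shows $\CP_\gamma$ is convex and that every $\CP$-geodesic between such vertices is a $\DP$-geodesic; but the final step --- ``a second application of Theorem \ref{thm:punctures} shows that it lies entirely in $\DP_\gamma$'' --- is unjustified, because Theorem \ref{thm:punctures} asserts only convexity of $\DP_\gamma$ in $\DP(S)$, and the paper is explicit (Remark \ref{rmk:outside}) that strata of $\DP(S)$ are in general \emph{not} totally geodesic. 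Indeed, from a vertex $\{\gamma,\beta_1,\beta_2\}$ with $\beta_2$ of type $3|3$ and $\gamma,\beta_1$ on opposite sides of $\beta_2$, an elementary move on $\gamma$ and one on $\beta_1$ are disjoint, so nothing prevents a $\DP$-geodesic from leaving $\DP_\gamma$ at its first step. The tool you actually need here is Theorem \ref{main2}(2), applied with $Y$ the five-holed-sphere side of $\gamma$ and $C=\{\gamma\}$: since a $\CP$-edge that drops $\gamma$ must be the elementary move replacing $\gamma$, the forgetful projection $\phi_C$ identifies its two endpoints and makes the projected path strictly shorter, so $\CP_\gamma(S)$ is totally geodesic in $\CP(S)$. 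You never invoke this theorem, and the detour through $\DP$ cannot replace it.

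In the $3|3$ case you have the same problem one level down: you construct a copy of $\BZ^2$ through $P$ and $Q$ and cite \cite{APS2} for its \emph{convexity}, then conclude that ``every $\CP$-geodesic from $P$ to $Q$ stays inside it'' --- a non sequitur, since convexity produces one geodesic inside the plane and says nothing about the others. What is needed (and what the paper uses) is that the whole stratum $\CP_\gamma(S)$, isometric to a product of two Farey graphs, is \emph{totally geodesic}; that is the main theorem of \cite{APS2}, and the intermediate $\BZ^2$ is both insufficient as you use it and unnecessary. Two smaller points: Proposition \ref{prop:cyrilito} as stated only involves single-curve strata, so your $|C|=2$ case is superfluous; and your opening sentence states the equivalence with ``every stratum is convex,'' which is not what the Appendix proves --- the Dehn-twist construction in its proof manufactures infinitely many geodesics precisely from one geodesic that \emph{leaves} a stratum, i.e.\ from a failure of total geodesicity, not of convexity.
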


The analogue of Corollary \ref{geodesics} for $\DP(S)$ is not true. Indeed, we will observe that  if $S$ is a sphere with at least six punctures, one can find pairs of vertices of $\DP(S)$ such that there are infinitely many geodesics in $\DP(S)$ between them. However, we will prove:

\begin{corollary} Given any surface $S$, and any $k$, there exist points in $\DP(S)$ at distance $k$ with only a finite number of geodesics between them.
\label{cor:far}
\end{corollary}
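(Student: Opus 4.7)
My approach is to exploit the convexity of deficiency-one strata (Theorem \ref{thm:general}(1)) in order to confine all geodesics between carefully chosen vertices to a complexity-one subsurface, inside which the pants graph reduces to a Farey graph and finite geodesicity is classical.

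Concretely, fix $k$ and first assume $\xi(S) \geq 2$. I would pick an essential subsurface $Y \subset S$ of complexity one (a four-holed sphere or a one-holed torus) and set $C = \partial Y \cup C_0$, where $C_0$ is any pants decomposition of $S \setminus Y$. Then $C$ is a deficiency-one multicurve, so Theorem \ref{thm:general}(1) asserts that $\DP_C(S)$ is convex in $\DP(S)$. Next I would observe that the natural map $\DP_C(S) \to \CP(Y)$, sending a pants decomposition of $S$ containing $C$ to its unique essential curve inside $Y$, is a graph isomorphism: the deficiency-one condition rules out pairs of simultaneous disjoint elementary moves within the stratum, so adjacency in $\DP_C(S)$ coincides with adjacency in $\CP(Y)$. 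Since $Y$ has complexity one, $\CP(Y)$ is isomorphic to the Farey graph.

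The Farey graph being infinite, I can choose $P, Q \in \DP_C(S)$ with $d_{\DP_C(S)}(P, Q) = k$. By convexity, $d_{\DP(S)}(P, Q) = k$ and every $\DP(S)$-geodesic from $P$ to $Q$ lies inside $\DP_C(S)$, so the set of $\DP(S)$-geodesics from $P$ to $Q$ agrees with the set of $\CP(Y)$-geodesics between the corresponding curves. The remaining case $\xi(S) = 1$ is immediate since $\DP(S) = \CP(S)$ is already a Farey graph.

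The one nontrivial input is the statement that the Farey graph admits only finitely many geodesics between any pair of vertices. This is classical -- it is the base case underlying the finite-geodesicity results of \cite{APS1} -- and is the main obstacle in the plan; once granted, the corollary follows formally from the convexity statement, and the pairs $(P,Q)$ produced above realize arbitrary integer distances $k$.
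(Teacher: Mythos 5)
Your plan breaks down at the step ``by convexity, every $\DP(S)$-geodesic from $P$ to $Q$ lies inside $\DP_C(S)$.'' In this paper \emph{convex} means only that the stratum realizes the ambient distance (equivalently, that \emph{some} geodesic between any two of its vertices stays inside); the property you are invoking is \emph{totally geodesic}, which is strictly stronger and which the paper explicitly shows fails for deficiency-one strata once $\kappa(S)\ge 3$ (Remark \ref{rmk:outside3}, and the discussion following Corollary \ref{geodesics}). Theorem \ref{thm:themeat} only forces every vertex of a geodesic between $P,Q\in\DP_C(S)$ to contain $\partial F$, not all of $C$. Concretely, if $S$ cut along $\partial Y$ has a component of positive complexity disjoint from $Y$ (unavoidable when $\kappa(S)\ge 6$, since $|\partial Y|\le 4$), a geodesic may perform an elementary move on a curve of $C_0$ at one step, disjointly from the move inside $Y$, and undo it at the next step at no cost in $\DP(S)$. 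There are infinitely many choices for that detour, so the pairs $(P,Q)$ you construct typically have \emph{infinitely} many geodesics between them --- this is exactly the phenomenon of Remark \ref{rmk:outside}. The isomorphism $\DP_C(S)\cong\CP(Y)$ and the finite geodesicity of the Farey graph are fine, but they only control the geodesics that happen to stay in the stratum.

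The paper's proof is designed precisely to kill these free detours: it takes a \emph{rank-realizing} multicurve $D$, whose complement is filled by the maximal number $R$ of disjoint complexity-one pieces $X_1,\dots,X_R$ (plus pairs of pants), and chooses $Q_k$ so that the distance $k$ is achieved \emph{simultaneously} in every $X_i$ along geodesic rays. Then every step of every geodesic must advance by an elementary move in each $X_i$, leaving no room for extraneous disjoint moves; all geodesics are confined to $\DP_D(S)$ and project to geodesics in each Farey graph $\CP(X_i)$, of which there are finitely many. To repair your argument you would need either this simultaneous-saturation device or a choice of $F$ whose complement is a union of pairs of pants --- and the latter is impossible for $\kappa(S)\ge 6$.
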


\noindent {\bf Acknowledgements.} The authors would like to thank Jeff Brock and Saul Schleimer for conversations.

\section{Preliminaries}
\label{prelim}

Let $S$ be a connected orientable surface with empty boundary and negative Euler characteristic. The complexity
of $S$ is the number $\kappa(S) =3g-3+p$, where $g$ and $p$ denote, respectively, the genus and number of punctures of $S$.

\subsection{Curves and multicurves}  By a curve on $S$ we will mean
a homotopy class of simple closed curves on $S$; we will often blur the distinction between a curve and any of its representatives. 
We say that a curve $\alpha \subset S$ is essential if no representative of $\alpha$ bounds a disk with at most one puncture. 
The geometric intersection number between two curves $\alpha$ and $\beta$ is defined as \[ i(\alpha, \beta) = \min\{|a \cap b| : a \in \alpha, b \in \beta\}.\]
If $\alpha\neq\beta$ and $i(\alpha, \beta) = 0$ we say that $\alpha$ and $\beta$ are disjoint; otherwise, we say they intersect. We say that the curves $\alpha$ and $\beta$ fill $S$ if $i(\alpha,\gamma)+i(\beta,\gamma)>0$ for all curves $\gamma \subset S$. 

A multicurve is a collection of  pairwise distinct and pairwise disjoint essential curves. The deficiency of a multicurve $C$ is defined as $\kappa(S) - |C|$, where
$|C|$ denotes the number of elements of $C$. 

\subsection{Graphs of pants decompositions}
A pants decomposition $P$ of $S$ is a multicurve that is maximal with respect to inclusion. As such, $P$ consists of exactly $\kappa(S)$ curves, and has a representative $P'$ such that every connected component of $S \setminus P'$ is homeomorphic to a sphere with three punctures, or
pair of pants.  

We say that two pants decompositions of $S$ are related by an elementary move
if they have exactly $\kappa(S) -1$ curves in common, and the remaining two curves either fill a one-holed torus and intersect
exactly once, or else they fill a four-holed sphere and intersect exactly twice. Observe that every elementary move determines a unique subsurface of $S$ of complexity 1; we will say that two elementary moves are disjoint if the subsurfaces they determine are disjoint.

As mentioned in the introduction, the {\em pants graph} $\CP(S)$ of $S$ is the simplicial graph whose vertex set is the set of pants decompositions of $S$, considered
up to homotopy, and where two pants decompositions are adjacent in $\CP(S)$ if and only if they are related by an  elementary move. We turn  $\CP(S)$ into a geodesic metric space by declaring the length of each edge to be 1.  

 The  diagonal pants graph $\DP(S)$ is the simplicial graph obtained from $\CP(S)$ by adding an edge of unit length between any two vertices that differ  by $k\geq 2$ disjoint elementary moves. 

The cubical pants graph $\CCP(S)$
is obtained from $\CP(S)$ by adding an edge {\em of length} $\sqrt{k}$ between any two edges that differ
by $k\ge 2$ disjoint elementary moves.

\section{The forgetful maps: proofs of Theorems \ref{thm:punctures} and \ref{main2}}

The idea of the proof of Theorems \ref{thm:punctures} and \ref{main2} is to use the so-called {\em forgetful maps} to define a distance non-increasing 
projection from the diagonal pants graph to each of its strata. 

\subsection{Forgetful maps} Let $S_1$ and  $S_2$ be connected orientable surfaces with empty boundary.
Suppose that $S_1$ and $S_2$ have equal genus, and that $S_2$ has at most as many punctures as $S_1$.
Choosing an identification between the set of punctures of $S_2$ and a subset of the set of punctures of $S_1$ yields a map $\tilde{\psi}:S_1 \to \tilde{S}_1$, where $\tilde{S}_1$ is obtained by forgetting all punctures of $S_1$ that do not correspond to a puncture of $S_2$. Now, $\tilde{S}_1$ and $S_2$ are homeomorphic; by choosing a homeomorphism $\varphi:\tilde{S}_1\to S_2$, we obtain a map $\psi=\varphi\circ \tilde{\psi}: S_1 \to S_2$. We refer to all such maps $\psi$ as {\em forgetful maps}.

Observe that if $\alpha\subset S_1$ is a curve then $\psi(\alpha)$ is a (possibly homotopically trivial) curve on $S_2$. Also, $i(\psi(\alpha),\psi(\beta))\le i(\alpha,\beta)$ for al curves $\alpha,
\beta \subset S$. As a consequence, if $C$ is a multicurve on $S_1$ then $\psi(C)$ is a multicurve on $S_2$; in particular $\psi$ maps pants decompositions to pants decompositions. Lastly, observe that if $P$ and $Q$ are pants decompositions of $S_1$ that differ by at most $k$ pairwise disjoint elementary moves, then $\psi(P)$ and $\psi(Q)$ differ by at most $k$ pairwise disjoint elementary moves. We summarize these observations as a lemma.

\begin{lemma} Let $\psi:S_1 \to S_2$ be a forgetful map. Then:
\begin{enumerate}
\item If $P$ is a pants decomposition of $S_1$, then $\psi(P)$ is a pants decomposition of $S_2$. 
		
\item If $P$ and $Q$ are related by $k$ disjoint elementary moves in $S_1$, then $\psi(S_1)$ and $\psi(S_2)$ are related by at most $k$ disjoint elementary moves in $S_2$. \hfill$\square$
\end{enumerate}
\label{lem:forget}
\end{lemma}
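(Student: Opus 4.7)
The plan is to verify both parts directly, since the lemma simply summarizes observations spelled out in the paragraph preceding it. First I factor $\psi = \varphi\circ\tilde\psi$; because $\varphi$ is a homeomorphism, it trivially preserves pants decompositions and elementary moves, so it suffices to treat $\tilde\psi$. Further decomposing $\tilde\psi$ as a sequence of maps each forgetting exactly one puncture, an induction reduces the lemma to the case where $S_2$ is obtained from $S_1$ by filling in a single puncture $p$.

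For part (1), I locate $p$ in the unique pair of pants $\Pi$ of $S_1\setminus P$ that contains it. Filling in $p$ reduces the complexity of $\Pi$, and the effect depends on how many of the three boundary components of $\Pi$ come from curves of $P$ rather than from other punctures of $S_1$. If two boundaries are curves of $P$, then $\Pi$ becomes an annulus and the two corresponding curves of $P$ become isotopic in $S_2$; if only one is a curve of $P$, then $\Pi$ becomes a once-punctured disk and that curve becomes trivial; the remaining case (three puncture boundaries) cannot occur for a connected $S_1$ with a nonempty pants decomposition. In each surviving subcase, a direct check of the complementary regions, combined with the count $|\psi(P)| = \kappa(S_1) - 1 = \kappa(S_2)$, confirms that the complement of $\psi(P)$ in $S_2$ is a disjoint union of pairs of pants.

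For part (2), each of the $k$ disjoint elementary moves between $P$ and $Q$ is supported in a unique complexity-one subsurface $Y\subset S_1$. If $p\notin Y$, then $\tilde\psi$ restricts to a homeomorphism on $Y$ and the corresponding move survives in $S_2$. If $p\in Y$, then $\tilde\psi(Y)$ has complexity zero, so both curves exchanged by the move become either trivial or boundary-parallel in $S_2$ and the move collapses to the identity. Either way $\psi(P)$ and $\psi(Q)$ differ by at most $k$ elementary moves in $S_2$, and disjointness is preserved because disjoint subsurfaces of $S_1$ continue to have disjoint images under $\tilde\psi$ after filling in a puncture.

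The main subtlety is the case analysis for part (1): in the two-boundary-curves subcase, one must handle the configuration in which both boundary curves of $\Pi$ are shared with a common adjacent pair of pants $\Pi'$, since then the merged region in $S_2$ has the topology of a one-holed torus, and one must verify that the surviving curves of $\psi(P)$ inside it still cut it into a genuine pair of pants.
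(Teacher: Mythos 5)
Your proof is correct, but it is considerably more detailed than what the paper offers: the paper gives no formal proof at all, simply recording in the paragraph before the lemma that $\psi$ sends curves to (possibly trivial) curves, that $i(\psi(\alpha),\psi(\beta))\le i(\alpha,\beta)$, and hence that multicurves go to multicurves and pants decompositions to pants decompositions, with the statement about elementary moves asserted in the same breath. Your route --- factoring $\psi$ through maps that each fill in a single puncture, locating that puncture in its complementary pair of pants $\Pi$, and running the two-boundary-curves versus one-boundary-curve dichotomy together with the count $|\psi(P)|=\kappa(S_1)-1=\kappa(S_2)$ --- actually supplies the one point the paper's observations do not by themselves settle, namely why the image multicurve is again \emph{maximal} (equivalently, why the complementary regions of $\psi(P)$ in $S_2$ are still pairs of pants); the same is true of your analysis in part (2) of whether the complexity-one support $Y$ of a move contains the forgotten puncture, which explains precisely when a move survives and when it collapses. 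The cost of your approach is the bookkeeping of degenerate configurations (the two holes of $\Pi$ lying on the same curve of $P$, or the one-holed-torus merging you flag), all of which are either excluded by the standing hypothesis that the surfaces have negative Euler characteristic or resolved by the check you describe; the benefit is a genuine verification where the paper has only a plausibility argument.
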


In light of these properties, we obtain that forgetful  maps induce distance non-increasing maps $$\CP(S_1) \to \CP(S_2)$$ and, similarly, $$\DP(S_1) \to \DP(S_2) \hspace{.5cm}  \text{ and } \hspace{.5cm} \CCP(S_1)\to \CCP(S_2).$$
 
\subsection{Projecting to strata} \label{sec:projection} Let $S$ be a sphere with punctures,  $C$ a multicurve on $S$, and consider $S \setminus C = X_1 \sqcup \ldots \sqcup X_n$. 
For each $i$, we proceed as follows. On each connected component of $S\setminus X_i$ we choose a puncture of $S$. Let $\bar X_i$ be the surface
obtained from $S$ by forgetting all punctures of $S$ in $S\setminus X_i$ except for these chosen punctures. Noting that $\bar X_i$ is naturally homeomorphic to $X_i$, as above 
we obtain a map $$\phi_i: \CP(S) \to \CP(X_i),$$ for all $i = 1, \ldots, n$. 
We now define a map $$\phi_C: \CP(S) \to \CP_C(S)$$  by setting $$ \phi_C(P)= \phi_1(P)\cup \ldots \cup \phi_n(P) \cup C.$$ Abusing notation, observe that we also obtain
maps  $$\phi_C: \DP(S) \to \DP_C(S) \hspace{.5cm}  \text{ and } \hspace{.5cm} \phi_C: \CCP(S) \to \CCP_C(S).$$ The following is an immediate consequence of the definitions and Lemma \ref{lem:forget}:


\begin{lemma}
Let $C\subset S$ be a multicurve. Then
\begin{enumerate}

		\item If $P$ is a pants decomposition of $S$ that contains $C$, then $\phi_C(P) =P$.
		\item For all $P,Q \in \DP(S)$,  $d_{\DP(S)}(\phi_C(P), \phi_C(Q)) \le d_{\DP(S)}(P,Q).$ \hfill$\square$

	\end{enumerate}
\label{project}
\end{lemma}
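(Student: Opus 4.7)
The plan is to verify both assertions directly from the construction of $\phi_C$ as the union of forgetful projections $\phi_i$. For part (1), if $P \supset C$ then $P$ decomposes as $C \cup P_1 \cup \cdots \cup P_n$ with each $P_i$ a pants decomposition of $X_i$. I would show that $\phi_i(P) = P_i$ by examining the image of each curve under $\psi_i$. Curves of $P_i$ already lie in $X_i$ and so remain essential and pairwise non-homotopic in $\bar X_i$. For the remaining curves (those in $C$ and in $P_j$ for $j \ne i$), I rely on the planarity of $S$: any such curve $\gamma$ lies in a single component $D_l$ of $S \setminus X_i$ and separates $S$ into two pieces, the one not containing $X_i$ being entirely inside $D_l$. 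Since $\bar X_i$ retains at most one puncture (the chosen $p_l$) inside $D_l$, that piece is a disk with at most one puncture, making $\gamma$ inessential in $\bar X_i$. Hence $\phi_i(P) = P_i$, and $\phi_C(P) = C \cup \bigcup_i P_i = P$.

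For part (2), by induction on the length of a geodesic from $P$ to $Q$ it suffices to show that $\phi_C$ sends each $\DP$-edge to a walk of length at most $1$. Suppose $R, R' \in \DP(S)$ differ by $k$ pairwise disjoint elementary moves. Applying Lemma \ref{lem:forget}(2) to each $\psi_i$ shows that $\phi_i(R)$ and $\phi_i(R')$ differ by some $k_i \le k$ disjoint elementary moves supported on disjoint complexity-one subsurfaces of $X_i$. Since the $X_i$ are pairwise disjoint in $S$, the full family of these supports (across all $i$) consists of pairwise disjoint complexity-one subsurfaces of $S$, all disjoint from $C$. Thus $\phi_C(R)$ and $\phi_C(R')$ are related by $\sum_i k_i$ pairwise disjoint elementary moves in $S$, so $d_{\DP(S)}(\phi_C(R), \phi_C(R')) \le 1$. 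Telescoping along a geodesic gives the desired inequality.

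I expect the main obstacle to lie in part (1), namely in confirming that every curve outside $X_i$ becomes inessential in $\bar X_i$. This is where the hypothesis that $S$ is a sphere genuinely enters: on a sphere every simple closed curve separates, and combined with the one-puncture-per-component rule this forces the necessary triviality. Without this planarity one can produce non-separating curves in $P_j$ that survive $\psi_i$ as essential curves, so $\phi_i(P)$ would contain more curves than $P_i$ and part (1) would fail, which explains why this exact projection argument is restricted to spheres here while Theorem \ref{main2} handles only very special higher-genus configurations. Once part (1) is settled, part (2) follows essentially from the disjointness of the strata $X_i$.
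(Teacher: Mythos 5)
Your proof is correct and follows the route the paper intends: the paper states this lemma without proof as an immediate consequence of the definitions and Lemma \ref{lem:forget}, and your argument is precisely the expansion of that claim (part (1) from the planarity of the complementary components of $X_i$ plus the one-puncture-per-component convention, part (2) by applying Lemma \ref{lem:forget}(2) on each $X_i$ and using disjointness of the $X_i$ to combine the elementary moves). No gaps.
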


\subsection{Proof of Theorem \ref{thm:punctures}} After the above discussion, we are ready to give a proof of our first result:

\begin{proof}[Proof of Theorem \ref{thm:punctures}] Let $P$ and $Q$ be vertices of $\DP_C(S)$, and let $\omega$ be a path in $\DP(S)$ between them. By Lemma \ref{project}, $\phi_C(\omega)$ is a path in $\DP_C(S)$ between $\phi_C(P)=P$ and $\phi_C(Q)=Q$, of length at most that of $\omega$. Hence
$\DP_C(S)$ is convex. 
\end{proof}

\begin{remark}[Strata that are not totally geodesic]
{\rm If $S$ has at least 6 punctures, there exist multicurves $C$ in $S$ for which $\DP_C(S)$ fails to be totally geodesic. For instance, let $C= \alpha \cup \beta$, where $\alpha$ cuts off
a four-holed sphere $Z\subset S$ and $\beta \subset Z$. Let $P, Q\in \DP_C(S)$ at distance at least 3, and choose a geodesic path $$P=P_1, P_2, \ldots, P_{n-1}, P_n = Q$$ in $\DP_C(S)$ between them. For each $i= 2, \ldots, n-1$, let $P_i'$ be a pants decomposition obtained from $P_i$ by replacing the curve $\beta$ with a curve $\beta' \subset Z$ such that $P_i$ and $P'_i$ are related by an elementary move for all $i$. Observing that $P$ and $P'_1$ (resp. $P'_{n-1}$ and $Q$) are related by two disjoint elementary moves, and therefore are adjacent in $\DP(S)$, we obtain a geodesic path $$P=P_1, P'_2, \ldots, P'_{n-1}, P_n = Q$$ between $P$ and $Q$ that lies entirely outside $\DP_C(S)$ except at the endpoints. In particular, $\DP(S)$ is not totally geodesic. }
\label{rmk:outside}
\end{remark}

\subsection{Proof of Theorem \ref{main2}}  Let $Y$ be a subsurface of $S$ such that $Y$ has the same genus as $S$. Each component of $\partial Y$ bounds in $S$ a punctured disc. As in Section \ref{sec:projection}, we may define a projection $$\phi_Y: \CP(S) \to \CP(Y),$$  this time by choosing one puncture of $S$ in each punctured disc bounded by a component of $\partial Y$. Let $C$ be the union of a pants decomposition of $S\setminus Y$ with all the boundary components of $Y$. We obtain a map: 
$$\phi_C: \CP(S) \to \CP_C(S)$$ by setting $\phi_C(P)= \phi_Y(P) \cup C.$ Again abusing notation, we also have maps  $$\phi_C: \DP(S) \to \DP_C(S) \hspace{.5cm}  \text{ and } \hspace{.5cm} \phi_C: \CCP(S) \to \CCP_C(S).$$

\begin{proof}[Proof of Theorem \ref{main2}]
We prove part (2) only, since the proof of part (1) is completely analogous to that of Theorem \ref{thm:punctures}. Since $Y$ is connected, Lemma \ref{lem:forget} implies that, for any pants decompositions $P$ and $Q$ of $S$,  the projections $\phi_C(P)$ and $\phi_C(Q)$ differ by at most as many elementary moves as $P$ and $Q$ do.  
In other words, the maps $\phi_C: \CP(S) \to \CP_C(S)$ and $\phi_C: \CCP(S) \to \CCP_C(S)$ are distance non-increasing. 

Suppose, for contradiction, there exist $P,Q \in \CP_C(S)$ and a geodesic $\omega: P=P_0,P_1, \ldots, P_n=Q$ such that $P_i\in \CP_C(S)$ if and only if $i=0,n$. 
Let $\tilde{P}_i = \phi_C(P_i)$ for all $i$, noting that $\tilde{P}_0=P$ and  $\tilde{P}_n=Q$. Moreover, since $P$ contains $C$ but $P_1$ does not, then
$\tilde{P}_1= \phi_C(P_1) = P_0$. Therefore,  the projected path $\phi_C(\omega)$ is strictly shorter (both in $\CP(S)$ and $\CCP(S))$ than $\omega$, and the result
follows. \end{proof}

\begin{remark}
{\rm If $S$ has at least $3$ punctures (and genus at least $1$), then there are multicurves $C\subset S$ such that $\DP_C(S)$ is not totally geodesic. One such example is $C=\alpha\cup\beta$, where $\alpha$ cuts off a four-holed sphere $Z\subset S$, and $\beta \subset Z$; compare with Remark \ref{rmk:outside}.}
\label{rmk:outside2}
\end{remark}

\section{Convex Farey graphs: proof of Theorem \ref{thm:general}}
The main goal of this section is to prove that, for any surface $S$ and any multicurve $C\subset S$ of deficiency 1, the stratum $\DP_C(S)$ is convex in $\DP(S)$. This is the equivalent theorem for $\DP(S)$ of the main result of \cite{APS1}, which states that any isomorphic copy of a Farey graph in $\CP(S)$ is totally geodesic; compare with Remark \ref{rmk:outside3} below.

\subsection{Geometric subsurface projections}
We recall the definition and some properties of subsurface projections, as introduced by Masur-Minsky \cite{MM2}, in the particular case where the subsurface $F$ has complexity $1$.



Let $C\subset S$ be a deficiency 1 multicurve; as such, $S\setminus C$ contains an incompressible  subsurface $F$ of complexity 1, thus either a one-holed torus or a four-holed sphere. 
Let $\alpha\subset  S$ be a curve that intersects $F$ essentially. Let $a$ be a connected component of $\alpha \cap F$; as such, $a$ is either a curve in $F$, or an arc with endpoints on $\partial F$. Now, there is exactly one curve $\gamma_a$ in $F$ that is disjoint from $a$, and we refer to the pants decomposition $\gamma_a \cup C$ as a  projection of $\alpha$. 

We write $\pi_C(\alpha)$ to denote the set of all projections of $\alpha$, each counted once; we note that $\pi_C(\alpha)$  depends only on $\alpha$ and $C$. If $D\subset S$ is a multicurve, then $\pi_C(D)$ is the union
of the projections of the elements of $D$. In the special case where a multicurve $D$ does not intersect $F$ essentially, we set $\pi_C(D) = \emptyset$. Finally, observe that if $D$ contains a curve $\beta$ that is contained in $F$, then $\pi_C(D)=\beta\cup C$.\\

We will need the following notation. Given an arc $a\subset F$ with endpoints in $\partial F$, we call it a {\it wave} if its endpoints belong to the same boundary component of $F$; otherwise, we call it a {\it seam}. These two types of arcs are illustrated in Figure \ref{fig:arcs}. Observe that any arc in a one-holed torus is a seam. 

\begin{figure}[h]
\leavevmode \SetLabels
\endSetLabels
\begin{center}
\AffixLabels{\centerline{\epsfig{file =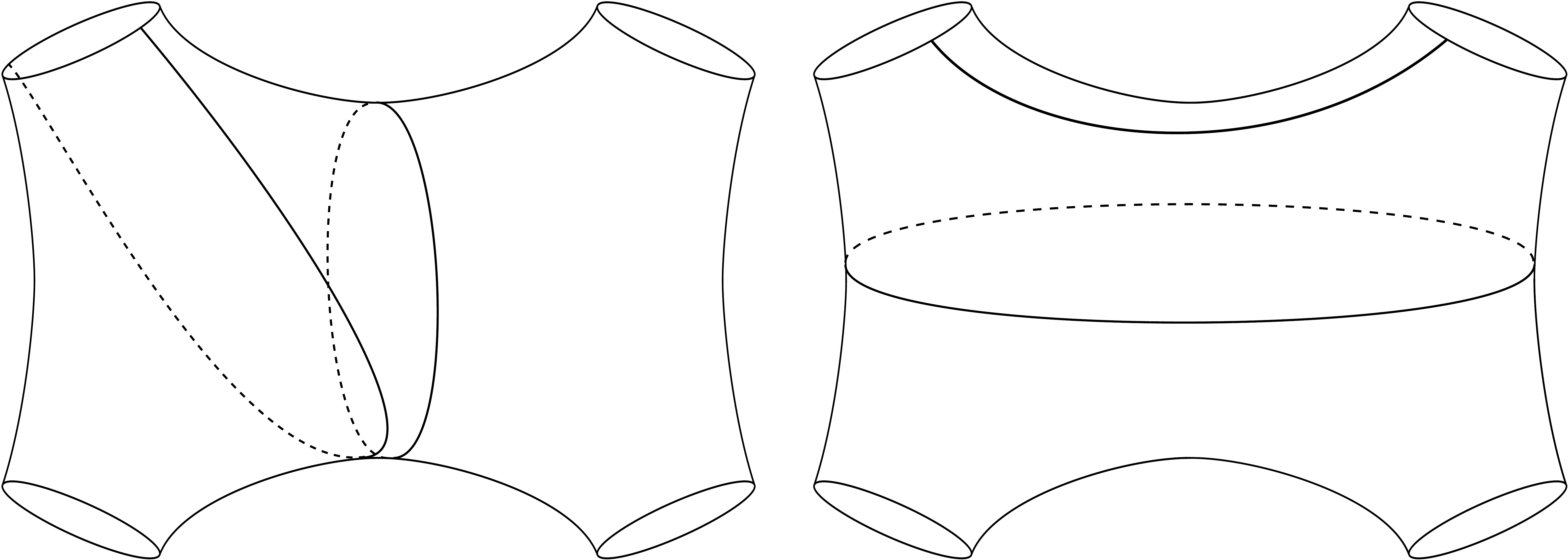,height=4.5cm,angle=0}\hspace{.2cm} \epsfig{file =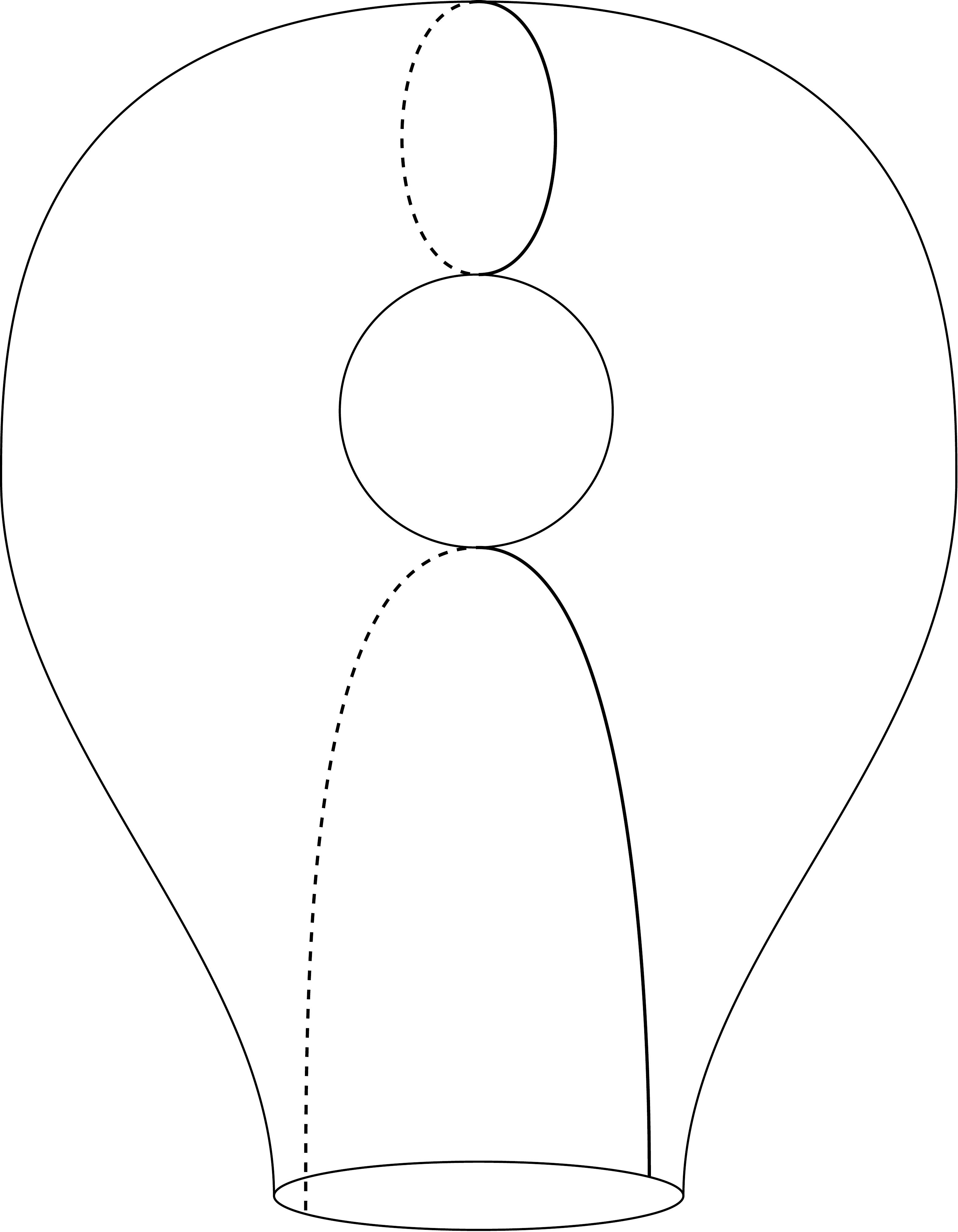,height=4.5cm,angle=0}}}
\end{center}
\caption{A wave and a seam on a four holed sphere, a wave on a torus (and their projections)} \label{fig:arcs}
\end{figure}

\subsection{Preliminary lemmas}
Unless specified, $C$ will always be a deficiency $1$ multicurve on $S$, and $F$ the unique (up to isotopy) incompressible subsurface  of complexity 1 in $S\setminus C$.
Observe that, in this case, $\DP_C(S)$ is isomorphic to $\CP(F)$. We will need some lemmas, similar to those previously used in \cite{APS1}. First, since any two disjoint arcs
in a one-holed torus project to curves that intersect at most once, we have:

\begin{lemma}\label{lem:disjointT} 
Suppose $F$ is a one-holed torus, and let $\alpha,\beta\subset S$ be curves with $i(\alpha,\beta)=0$. Then, $\pi_C(\alpha\cup \beta)$ has diameter at most 1 in $\CP(F)$.\hfill $\square$
\end{lemma}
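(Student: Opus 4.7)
The plan is to unpack $\pi_C(\alpha\cup\beta)$ into projections of individual components of $(\alpha\cup\beta)\cap F$, and then show that any two of these projection curves in $F$ are equal or meet once. Since $F$ is a one-holed torus, $\CP(F)$ is the Farey graph, where vertices represent essential simple closed curves parametrized by slopes in $\BQ\cup\{\infty\}$ and two vertices are adjacent precisely when the corresponding curves intersect once. So \emph{diameter at most $1$} will reduce to showing that the slopes of the projections are either equal or Farey-adjacent.

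First I would observe that each component of $\alpha\cap F$ is either an essential simple closed curve (the only possibility if $\alpha\subset F$) or an essential simple arc with both endpoints on $\partial F$; the same for $\beta$. Because $i(\alpha,\beta)=0$, all these components are pairwise disjoint in $F$, as are components coming from the same curve. Each such component $c$ has a well-defined slope $s(c)\in\BQ\cup\{\infty\}$ (via its homology class in $H_1(T^2)$ after capping off $\partial F$), and by the very definition in the previous subsection its projection is the unique essential curve $\gamma_c\subset F$ of slope $s(c)$.

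The crux of the proof is then the following sub-claim: if $c_1,c_2$ are disjoint essential arcs-or-curves in the one-holed torus $F$, then their slopes $s(c_1)=p_1/q_1$ and $s(c_2)=p_2/q_2$ satisfy $|p_1q_2-p_2q_1|\le 1$. Granting this, the two projection curves $\gamma_{c_1},\gamma_{c_2}$ meet in $|p_1q_2-p_2q_1|\le 1$ points (the intersection number of two curves of those slopes in $T^2$, which is the same as in $F\subset T^2$), so they are equal or adjacent in $\CP(F)$. Applying this to any two components of $(\alpha\cup\beta)\cap F$ gives the diameter bound.

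To establish the sub-claim, I would lift to the universal cover of $T^2$, realise arcs and curves by straight-line representatives, and argue that two such representatives with $|p_1q_2-p_2q_1|\ge 2$ are forced to meet outside the lifts of the removed disc $D$. The main obstacle is precisely this geometric step: the analogous statement for a four-holed sphere would require tracking \emph{which} boundary components the arc endpoints lie on, which is why the lemma is stated only for the torus. An alternative, and perhaps cleaner, route is to invoke the standard identification of the arc-and-curve graph of a complexity-$1$ surface with the Farey graph via the slope parametrization, under which disjointness is exactly Farey adjacency; this makes the sub-claim essentially a restatement of known facts and avoids the hands-on lifting argument altogether.
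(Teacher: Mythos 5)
Your proposal is correct and follows essentially the same route as the paper, which justifies the lemma with the single observation that any two disjoint arcs in a one-holed torus project to curves intersecting at most once; you simply flesh out that observation via the slope parametrization and the Farey-graph description of $\CP(F)$.
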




For $F$ a four-holed sphere, we need the following instead:

\begin{lemma}\label{lem:dis2} 
Let $F$ be a four-holed sphere, and let $\alpha,\beta\subset F$ be curves with $i(\alpha,\beta)\le 8$. Then , $d_{\CP(F)}(\alpha, \beta) \le 2$. 
\end{lemma}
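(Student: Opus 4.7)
The plan is to translate Lemma~\ref{lem:dis2} into a question about the Farey graph, for which the assertion reduces to an elementary divisibility check. Recall the classical fact that isotopy classes of essential simple closed curves on the four-holed sphere $F$ are in bijection with $\BQ\cup\{\infty\}$ via slopes $p/q$ with $\gcd(p,q)=1$, and that for two such curves $\alpha=p/q$ and $\beta=p'/q'$ one has $i(\alpha,\beta)=2|pq'-p'q|$. Under this bijection $\CP(F)$ is isomorphic to the Farey graph, where $p/q$ and $p'/q'$ are joined by an edge if and only if $|pq'-p'q|=1$. The hypothesis $i(\alpha,\beta)\le 8$ therefore becomes $n:=|pq'-p'q|\le 4$, and what I must exhibit is either equality, adjacency, or a common Farey neighbor of $\alpha$ and $\beta$.

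Next I would exploit the action of the mapping class group of $F$ on $\CP(F)$: it acts by simplicial automorphisms and transitively on vertices (its image in $\mathrm{PGL}_2(\BZ)$ acts transitively on $\BQ\cup\{\infty\}$). After applying a suitable element I may therefore assume $\alpha=\infty=1/0$. Writing $\beta=p/q$ in lowest terms, the condition $n\le 4$ becomes $|q|\le 4$. The Farey neighbors of $\infty$ are precisely the integers $r/1$, so a common Farey neighbor of $\alpha$ and $\beta$ exists if and only if there is some $r\in\BZ$ with $|rq-p|=1$; equivalently, $p\equiv\pm 1\pmod{|q|}$.

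It remains to check this divisibility condition for $|q|\in\{0,1,2,3,4\}$. The values $|q|=0$ and $|q|=1$ correspond to $\alpha=\beta$ and $\alpha$ adjacent to $\beta$, respectively. For each $|q|\in\{2,3,4\}$ the units modulo $|q|$ are precisely $\{\pm 1\}$, so the coprimality of $p$ and $q$ forces $p\equiv\pm 1\pmod{|q|}$ automatically, and the required integer $r$ exists.

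The only step requiring real care is the initial setup of the Farey model; once the parameterization and the intersection formula $i=2|pq'-p'q|$ are in place, everything else is a very short arithmetic verification. It is also worth noting that the bound $8$ is sharp for this method: if we allowed $|q|=5$, then choosing $p\equiv\pm 2\pmod 5$ would leave no integer simultaneously adjacent to $\infty$ and to $p/5$ in the Farey graph, so a genuinely different argument would be needed to push the diameter estimate further.
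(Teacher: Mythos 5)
Your proposal is correct and follows essentially the same route as the paper: model curves on $F$ by slopes in $\BQ\cup\{\infty\}$ with $i(p/q,r/s)=2|ps-rq|$, normalize one curve by the modular group action, and verify the existence of a common Farey neighbor by elementary arithmetic. Your observation that the units modulo $2$, $3$, $4$ are exactly $\{\pm 1\}$ packages in one line the case-by-case computation the paper carries out explicitly for the worst case, and your remark on sharpness at intersection number $10$ is a correct bonus not present in the paper.
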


\begin{proof}
It is well-known that curves in $F$ correspond to elements of $\BQ \cup \{\infty\}$ expressed in lowest terms. Moreover, if $\alpha=p/q$ and $\beta=r/s$, then $i(\alpha,\beta) = 2|ps-rq|$. Up to the action of ${\rm SL}(2, \BZ)$, we may assume that $\alpha = 0/1$ and $\beta = r/s$. Since $i(\alpha, \beta) \le 8$, then $r\le 4$; suppose, for concreteness, that $r=4$. We are looking for a curve $\gamma=u/v$ with $i(\alpha,\gamma)= i(\beta,\gamma)=2$. Since $i(\alpha, \gamma) =2$, then $u=1$. Since ${\rm gcd}(r,s) =1$, then $s=4k+1$ or $s=4k+3$, for some $k$. In the former case, we set $v = k$ and, in the latter case, we set $v=k+1$. \end{proof}

As a direct consequence of the above, we obtain:.

\begin{lemma}\label{lem:disjoint} 
Suppose $F$ is a four-holed sphere, and let $\alpha,\beta\subset S$ be curves with $i(\alpha,\beta)=0$. Then, $\pi_C(\alpha\cup \beta)$ has diameter at most 2 in $\CP(F)$. \hfill $\square$
\end{lemma}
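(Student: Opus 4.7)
The plan is to reduce Lemma~\ref{lem:disjoint} to Lemma~\ref{lem:dis2} via a geometric bound on the intersection of projections of disjoint arcs. First I would observe that any two elements of $\pi_C(\alpha\cup\beta)$ are projections $\gamma_a\cup C$ and $\gamma_b\cup C$ of connected components $a,b$ of $(\alpha\cup\beta)\cap F$, and that $\alpha\cap\beta=\emptyset$ together with the simplicity of $\alpha$ and $\beta$ forces the arcs or curves $a,b$ to be disjoint in $F$. So it suffices to prove that any two disjoint arcs or curves $a,b\subset F$ satisfy $i(\gamma_a,\gamma_b)\leq 8$; Lemma~\ref{lem:dis2} then gives $d_{\CP(F)}(\gamma_a,\gamma_b)\leq 2$, which is the desired diameter bound.

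To verify the intersection bound, I would perform a short case analysis on the topological types of $a$ and $b$. If either $a$ or $b$ is an essential simple closed curve in $F$, then its projection is itself; since any two disjoint essential simple closed curves in the four-holed sphere are isotopic, the other projection equals the first and the intersection is zero. When both $a$ and $b$ are arcs, observe that the cut surface $F\setminus a$ is either a pair of pants (if $a$ is a seam) or the disjoint union of a pair of pants and an annulus (if $a$ is a wave), and in either case $\gamma_a$ is the unique essential curve of $F$ contained in $F\setminus a$. Since $b\subset F\setminus a$, it remains only to enumerate the possible topological types of $b$ inside this cut surface: whether $b$ is a seam between two components of $\partial(F\setminus a)$ that both come from $\partial F$, or a seam involving the frontier created by cutting, or a wave based at one of the boundary components; in each case a direct check shows $\gamma_b$ is either equal to $\gamma_a$ or is one of the other two canonical separating curves in $F$, giving $i(\gamma_a,\gamma_b)\leq 2$.

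The main technical obstacle is tracking the interaction of Dehn twists with the arcs and their projections, to ensure that the intersection bound remains valid when $a$ or $b$ is a ``twisted'' arc rather than a standard one. The key observation is that if $a$ and $b$ are disjoint and $a$ is obtained from a standard arc by a Dehn twist $T_\delta$ about some curve $\delta\subset F$, then disjointness forces $b$ to be twisted compatibly; $T_\delta$ then acts equivariantly on the pair $(a,b)$ and on the pair $(\gamma_a,\gamma_b)$, preserving their intersection number and reducing the analysis to the standard case above. Since $i(\gamma_a,\gamma_b)\leq 2\leq 8$ in every case, Lemma~\ref{lem:dis2} finishes the argument.
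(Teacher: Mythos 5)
Your proposal is correct in outline and follows the same skeleton the paper intends: the paper states the lemma as a ``direct consequence'' of Lemma~\ref{lem:dis2}, the implicit point being that any two components $a,b$ of $(\alpha\cup\beta)\cap F$ are disjoint and that disjoint arcs or curves in $F$ have projections with $i(\gamma_a,\gamma_b)\le 8$ --- which is immediate from the description of $\gamma_a$ as a component of the boundary of a regular neighbourhood of $a$ together with the boundary components it meets (so $\gamma_a$ consists of at most two parallel copies of $a$ plus arcs running along $\partial F$). Where you diverge is in how you obtain the intersection bound: instead of the one-line regular-neighbourhood count, you aim for the sharper bound $i(\gamma_a,\gamma_b)\le 2$ via a classification of pairs of disjoint arcs. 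That classification is sound in principle --- $i(\gamma_a,\gamma_b)$ depends only on the homeomorphism type of the pair $(F,a\cup b)$, and since $F\setminus a$ is a pair of pants (or a pair of pants plus an annulus), which supports no twisting, there are only finitely many types of $b$ --- but two points need tightening. First, the phrase ``$\gamma_b$ is one of the other two canonical separating curves'' is not meaningful without first normalizing $a$; the honest statement is that in each normalized configuration $\gamma_b$ is a specific curve meeting $\gamma_a$ in at most two points, and this really does require checking each case (e.g.\ when $a$ is a wave at $\partial_1$ cutting off $\partial_2$ and $b$ is a seam from $\partial_1$ to $\partial_2$ lying in the annular piece of $F\setminus a$, one must verify $\gamma_b=\gamma_a$ exactly, since two distinct curves inducing the same partition of the boundary components would intersect at least four times). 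Second, the final paragraph about Dehn twists is the weak spot: ``disjointness forces $b$ to be twisted compatibly'' is not an argument; the correct replacement is simply the normalization-by-homeomorphism observation already implicit in your second paragraph, after which no twisting case remains. That said, your proof is robust: even if one of your cases actually yields $i(\gamma_a,\gamma_b)=4$ rather than $2$, Lemma~\ref{lem:dis2} still applies, so the conclusion holds. The paper's (implicit) route via the crude bound $8$ avoids the case analysis entirely and is the more economical argument.
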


We will also need the following refinement of Lemma 9 of \cite{APS1}:

\begin{lemma}\label{lem:twocurves}
Let $P$ be a pants decomposition of $S$ with $\partial F \not\subset P$ and let $\alpha_1 \in P$ be a curve that essentially intersects $F$. Then there exists $\alpha_2\in P\setminus \alpha_1$ that essentially intersects $F$. Futhermore $\alpha_2$ can be chosen so that $\alpha_1$ and $\alpha_2$ share a pair of pants.
\end{lemma}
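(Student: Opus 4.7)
The plan is to prove the stronger ``furthermore'' clause directly, since it subsumes the existence statement. Let $Q_1$ and $Q_2$ denote the pairs of pants of $P$ containing $\alpha_1$ in their boundary (with $Q_1=Q_2$ permitted in the self-adjacency case), and write the other boundary curves as $\beta_i,\gamma_i\in\partial Q_i$. The goal is to show that at least one of $\beta_1,\gamma_1,\beta_2,\gamma_2$ essentially intersects $F$; any such curve will then serve as the required $\alpha_2$.

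The key auxiliary object is the subsurface $\widetilde\Sigma$ defined as the component of $S$ cut along $\beta_1\cup\gamma_1\cup\beta_2\cup\gamma_2$ that contains $\alpha_1$. Concretely, $\widetilde\Sigma$ is $\bar Q_1\cup_{\alpha_1}\bar Q_2$, the two closed pants glued only along $\alpha_1$; thus $\widetilde\Sigma$ is a four-holed sphere when $Q_1\neq Q_2$ and a one-holed torus when $Q_1=Q_2$, and in either case has complexity $1$. Moreover, $\widetilde\Sigma$ is incompressible in $S$ since its boundary consists of essential curves of $P$.

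The argument then goes by contradiction: assume none of $\beta_1,\gamma_1,\beta_2,\gamma_2$ essentially intersects $F$. After placing $P$ in minimal position with $\partial F$, each of these curves is disjoint from $F$, so $F$ is not cut when $S$ is cut along $\beta_1\cup\gamma_1\cup\beta_2\cup\gamma_2$. Since $\alpha_1$ meets $F$ essentially and $\alpha_1\subset\mathrm{int}(\widetilde\Sigma)$, the connected surface $F$ must lie entirely in $\widetilde\Sigma$. To finish I would compare Euler characteristics: both $F$ and $\widetilde\Sigma$ have complexity $1$, and $F$ is incompressible in $\widetilde\Sigma$, whence $\widetilde\Sigma\setminus\mathrm{int}(F)$ has Euler characteristic zero and consists of annuli pairing boundary components of $F$ with those of $\widetilde\Sigma$. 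Therefore $\partial F$ and $\partial\widetilde\Sigma$ coincide as multicurves in $S$, and since $\partial\widetilde\Sigma\subset P$ we obtain $\partial F\subset P$, contradicting the hypothesis.

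The main step requiring care is the Euler characteristic comparison, specifically the verification that $F$ and $\widetilde\Sigma$ share the same topological type: a one-holed torus cannot embed incompressibly in a four-holed sphere by a genus argument, and a four-holed sphere cannot sit incompressibly inside a one-holed torus since the complement would have positive Euler characteristic, forcing a disk that compresses a boundary component of $F$.
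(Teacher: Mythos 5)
Your proof is correct, but it takes a genuinely different route from the paper's. The paper argues in two steps: first, mere existence of a second curve of $P$ meeting $F$ essentially, by observing that otherwise $(P\setminus\alpha_1)\cup\partial F$ would be a pants decomposition of $S$ whose complement contains the complexity-$1$ surface $F$, which is absurd; second, it upgrades this to the ``shares a pair of pants'' statement by running a path inside $F$ from $\alpha_1$ to that curve and taking $\alpha_2$ to be the first curve of $P\setminus\alpha_1$ the path meets. You instead analyze the cuffs of the one or two pairs of pants adjacent to $\alpha_1$ directly: if none of them met $F$ essentially, the connected surface $F$ would be trapped inside the complexity-$1$ subsurface $\widetilde\Sigma=\bar Q_1\cup_{\alpha_1}\bar Q_2$, and the incompressibility/Euler characteristic comparison forces $F$ to be isotopic to $\widetilde\Sigma$, hence $\partial F\subset P$, contradicting the hypothesis. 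Your argument is a single unified one that makes the role of the hypothesis $\partial F\not\subset P$ completely transparent, at the cost of invoking the classification of incompressible complexity-$1$ subsurfaces of a complexity-$1$ surface; the paper's is more elementary (a curve count plus a connectivity argument) but splits the conclusion into two stages. Two small points you should make explicit: some of the $\beta_i,\gamma_i$ may be punctures of $S$ rather than curves of $P$ (this does not hurt the argument, since $\widetilde\Sigma$ still has complexity $1$ and the surviving cuffs still lie in $P$, but the set of candidate curves shrinks), and in the complement $\widetilde\Sigma\setminus\mathrm{int}(F)$ you must also rule out once-punctured-disk components, which likewise have Euler characteristic zero; this is where the essentiality of the components of $\partial F$ is used.
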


\begin{proof}
We prove the first assertion.  Suppose, for contradiction, that $\alpha_1$ is the only element of $P$ that essentially intersects $F$. Since $\partial F \not\subset P$ and $P\setminus \alpha_1$ has deficiency 1, then $P\setminus \alpha_1 \cup \partial F$ is a pants decomposition of $S$, which is impossible since its complement contains $F$, which has complexity 1. Thus there is a curve $\tilde{\alpha}\neq \alpha_1$ of $P$ essentially intersecting $F$.

To show that a second curve $\alpha_2$ can be chosen sharing a pair of pants with $\alpha_1$, consider any path $c:[0,1]\longrightarrow F$ with $c(0)\in \alpha_1$ and $c(1)\in \tilde{\alpha}$ and choose $\alpha_2$ to be the curve on which $c(\tau)$ lies, where $\tau = \min\{t | c(t) \in P\setminus \alpha_1 \}$. Thus there is a path between $\alpha_1$ and $\alpha_2$ which intersects $P$ only at its endpoints. Therefore, $\alpha_1$ and $\alpha_2$ share a pair of pants. 
\end{proof}

The next lemma will  be crucial to the proof of the main result.  In what follows, $d_C$ denotes distance in $\DP_C(S)=\CP_C(S)$

\begin{lemma}\label{lem:mainlemma}
Let $F$ be a four-holed sphere. Let $P,P'$ be pants decompositions at distance $1$  in $\DP(S)$ and let $\alpha\in P$, $\alpha' \in P'$ be curves related by an elementary move. Suppose both curves intersect $F$ essentially, and let $a$ be an arc of $\alpha$ on $F$. Then at least one of the following two statements holds:
\begin{enumerate}[i)]\item\label{casei}There exists $\tilde{\alpha} \in P\cap P'$ and an arc $\tilde{a}$ of $\tilde{\alpha}$ on $F$ such that 
$$d_C(\pi_C(a), \pi_C(\tilde{a})) \leq 1.$$
\item\label{caseii} The curve $\alpha'$ satisfies
$$d_C(\pi_C(a), \pi_C(\alpha')) \leq 2.$$
\end{enumerate}
\end{lemma}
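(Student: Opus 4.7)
Let $Y \subset S$ denote the unique (up to isotopy) essential, connected, complexity-$1$ subsurface supporting the elementary move $\alpha \to \alpha'$, so that $\alpha,\alpha' \subset Y$ and $\partial Y \subset P\cap P'$. Two sub-cases are immediate. If $a$ is also a component of $\alpha' \cap F$, then $\pi_C(a)\in\pi_C(\alpha')$ and $(ii)$ holds with distance $0$. If instead $Y$ is isotopic to $F$, then $\alpha,\alpha'\subset F$, $a=\alpha$, and $\alpha\cup C,\alpha'\cup C$ are adjacent in $\CP(F)$, so $(ii)$ holds with distance $1$. From now on assume neither of these holds; in particular, $Y$ is not isotopic to $F$ and the arc $a$ is altered by the move.

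Since $\alpha\subset Y$ meets $F$ essentially, $Y\cap F\neq\emptyset$. A standard Euler characteristic and boundary count argument shows that two distinct essential complexity-$1$ subsurfaces of $S$ cannot be strictly nested, so $\partial Y$ must essentially intersect $\partial F$. In particular, some component $\tilde\alpha$ of $\partial Y\subset P\cap P'$ intersects $F$ essentially and therefore possesses at least one essential arc on $F$. To establish $(i)$, one takes $\tilde a$ to be an arc of $\tilde\alpha$ on $F$ sharing a component of $\partial F$ with $a$ and adjacent to an endpoint of $a$ in the cyclic order on that component induced by $\partial Y \cap \partial F$. A topological case analysis on the four-holed sphere $F$, splitting according to whether $a$ and $\tilde a$ are waves or seams, then verifies that the dual curves $\gamma_a,\gamma_{\tilde a}$ either coincide or intersect in exactly two points, so that $d_C(\pi_C(a),\pi_C(\tilde a)) \le 1$.

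The chief obstacle is this final verification. The generic bound provided by Lemma \ref{lem:disjoint} only yields Farey distance at most $2$ for arcs of disjoint curves on $F$, so obtaining the stronger bound of $1$ required for $(i)$ forces one to exploit the specific rigidity that $\tilde\alpha$ is a boundary component of the move subsurface $Y$, rather than an arbitrary curve disjoint from $\alpha$. In configurations where this rigidity is insufficient, one instead verifies $(ii)$ directly via a bound $d_C(\pi_C(a),\pi_C(\alpha'))\le 2$ coming from $i(\alpha,\alpha')\le 2$; the crux of the proof is to organise the case split so that at least one of $(i)$ and $(ii)$ always applies.
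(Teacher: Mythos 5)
Your high-level strategy is the same as the paper's: the curve you call a component of $\partial Y$ is exactly the paper's $\tilde{\delta}$, a boundary component of the four-holed sphere $X$ filled by $\alpha\cup\alpha'$, and the fallback to $(ii)$ via $i(\alpha,\alpha')\le 2$ is also how the paper handles the easy configurations. But the proposal has a genuine gap: the entire content of the lemma is concentrated in the two steps you defer. First, you never pin down \emph{when} each alternative applies. The paper's trichotomy is concrete: if $i(a,a')\le 1$, or if at least one of $a,a'$ is a wave, then $i(\pi_C(a),\pi_C(a'))\le 8$ and Lemma \ref{lem:dis2} gives $(ii)$; the only remaining configuration is $i(a,a')=2$ with both arcs seams, and there $(i)$ is established. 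Your organisation ("in configurations where this rigidity is insufficient, one instead verifies $(ii)$") leaves open precisely the possibility the lemma must exclude, namely a configuration where the boundary-curve argument fails \emph{and} $d_C(\pi_C(a),\pi_C(\alpha'))=3$ --- the paper's own remark and Figure \ref{fig:lemproof1-2} show such distance-$3$ pairs do occur, so this is not a vacuous worry.

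Second, the "topological case analysis" you invoke to get $i(\gamma_a,\gamma_{\tilde a})\le 2$ is asserted, not performed, and your recipe for choosing $\tilde a$ ("adjacent to an endpoint of $a$ in the cyclic order") does not determine the arc the paper actually needs. The paper's argument is more specific: in the seam--seam, $i(a,a')=2$ case, the subarcs $c\subset a$ and $c'\subset a'$ between the two intersection points close up to a curve $\delta$ that is simultaneously a boundary component of $F$ and of $X$ (this uses algebraic intersection number $0$ and the uniqueness of the configuration in Figure \ref{fig:lemproof1-2}); the \emph{other} boundary curve $\tilde{\delta}$ of $X$ cut off by $c$ and $\alpha'$ then carries a specific essential arc $\tilde c$ in $F$ whose projection is checked to be at distance exactly $1$ from $\pi_C(a)$. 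Identifying $\delta$ as a common boundary curve of $F$ and $X$ is the key geometric fact, and nothing in your write-up supplies it. (Your nesting claim that $\partial Y$ must essentially intersect $\partial F$ also overlooks the case where a component of $\partial Y$ is an essential curve contained in $F$; that case is harmless, since such a curve is then the dual curve $\gamma_a$ itself, but it should be addressed rather than excluded by the stated argument.) As written, the proposal is a correct outline with the decisive verifications missing.
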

\begin{remark} The example illustrated in Figure \ref{fig:lemproof1-2} shows that case \ref{caseii}) does not cover everything. The two arcs labelled $a$ and $a'$, which can indeed be subarcs of curves related by an elementary move, produce projected curves that are distance $3$.
\end{remark}
\begin{proof}[Proof of lemma \ref{lem:mainlemma}]
First, we choose representatives of $\alpha$ and $\alpha'$ that realize $i(\alpha,\alpha')$; abusing notation, we denote these representatives by $\alpha$ and $\alpha'$. 

Let $a$ (resp. $a'$) be any arc of $\alpha$ (resp. $\alpha'$) in $F$. First, observe that if $i(a,a')\le 1$ then
$i(\pi_C(a), \pi_C(a')) \leq 8$. The same is true if at least one of $a$ or $a'$ is a wave. In either case, Lemma \ref{lem:dis2} yields that $d_C(\pi_C(a), \pi_C(\alpha')) \leq 2$ and thus the result follows. 

Therefore, it suffices to treat the case where $i(a,a')=2$ and both $a,a'$ are seams. In this case, we will show that  conclusion \ref{casei}) holds.

\begin{figure}[h]
\leavevmode \SetLabels
\L(.23*.4) $c'$\\
\L(.34*.73) $a$\\
\L(.83*.73) $a$\\
\L(.86*.41) $a'$\\
\L(.18*.73) $c$\\
\L(.075*-.023) $\delta$\\
\L(.57*-.023) $\delta$\\
\endSetLabels
\begin{center}
\AffixLabels{\centerline{\epsfig{file =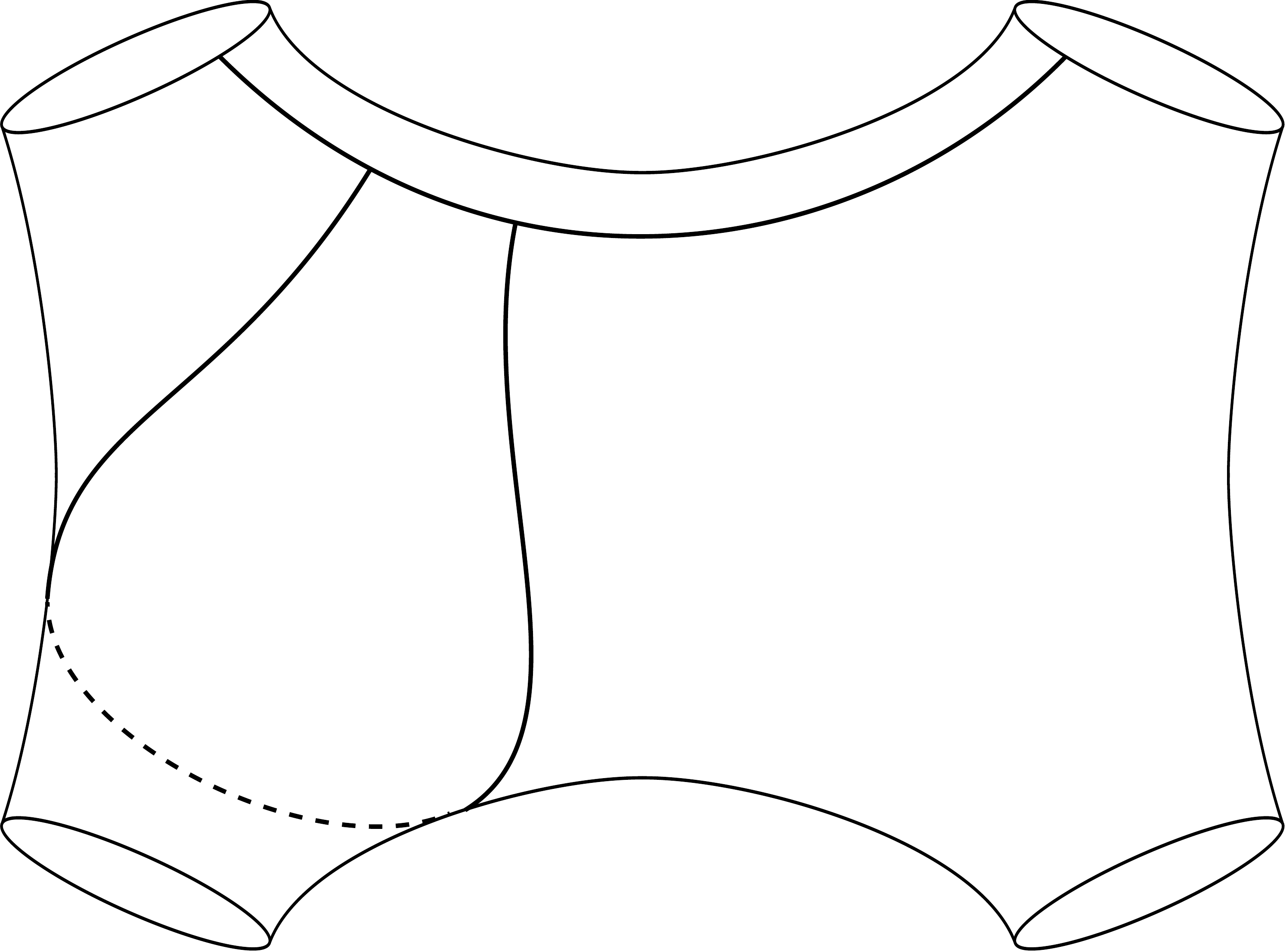,width=6.5cm,angle=0}\hspace{1.5cm} \epsfig{file =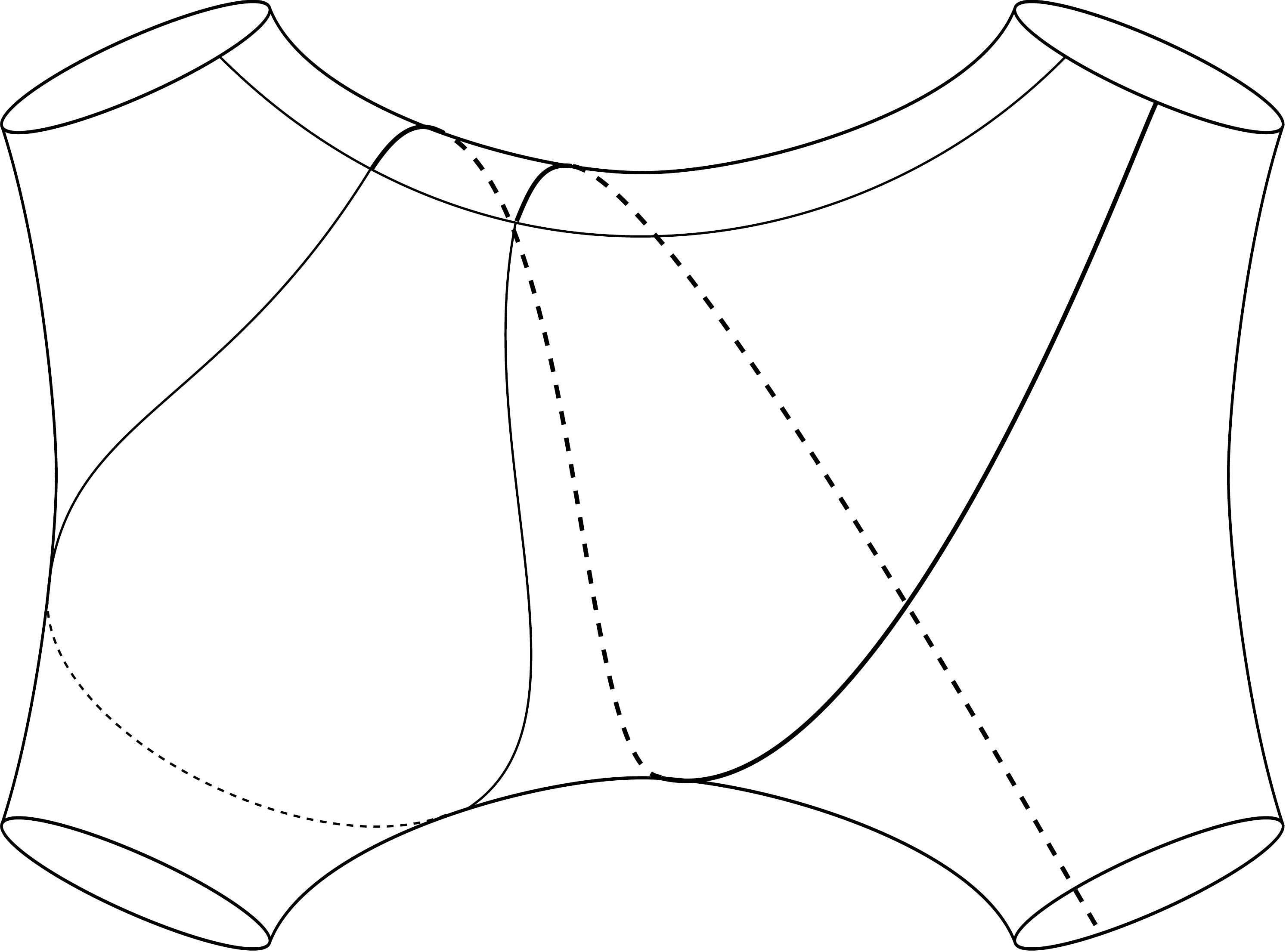,width=6.5cm,angle=0}}}
\end{center}
\caption{The arcs $a$ and $a'$, shown in the four-holed sphere $F$.} \label{fig:lemproof1-2}
\end{figure}

Consider the subarcs $c\subset a$ and $c'\subset a'$ between the two intersection points of $a$ and $a'$. As $\alpha$ and $\alpha'$ intersect twice and are related by an elementary move, they fill a unique (up to isotopy) incompressible four-holed sphere $X\subset S\setminus(P\cap P')$; in particular, they have algebraic intersection number 0. 
As the endpoints of $a'$ must lie on different boundary components of $F$, by checking the different possibilities, the only topological configuration of the two arcs is the one shown in Figure \ref{fig:lemproof1-2}.

Denote $\delta$ the curve in the homotopy class of $c \cup c'$. This curve is a boundary curve of $F$, as is illustrated in Figure  \ref{fig:lemproof1-2}, but is also one of the boundary curves of $X$. This is because $\delta$ is disjoint from all the boundary curves of $X$, and is not homotopically trivial since $i(\alpha,\alpha')=2$. In $X$, 
the curve $\alpha'$ and the arc $c$ determine two boundary curves of $X$, namely $\delta$ and another one we shall denote $\tilde{\delta}$ (see Figure \ref{fig:lemproof3}).

\begin{figure}[h]
\leavevmode \SetLabels
\L(.44*.46) $\alpha$\\
\L(.53*.73) $\alpha'$\\
\L(.6*.38) $c$\\
\L(.67*.96) $\delta$\\
\L(.67*-.025) $\tilde{\delta}$\\
\endSetLabels
\begin{center}
\AffixLabels{\centerline{\epsfig{file =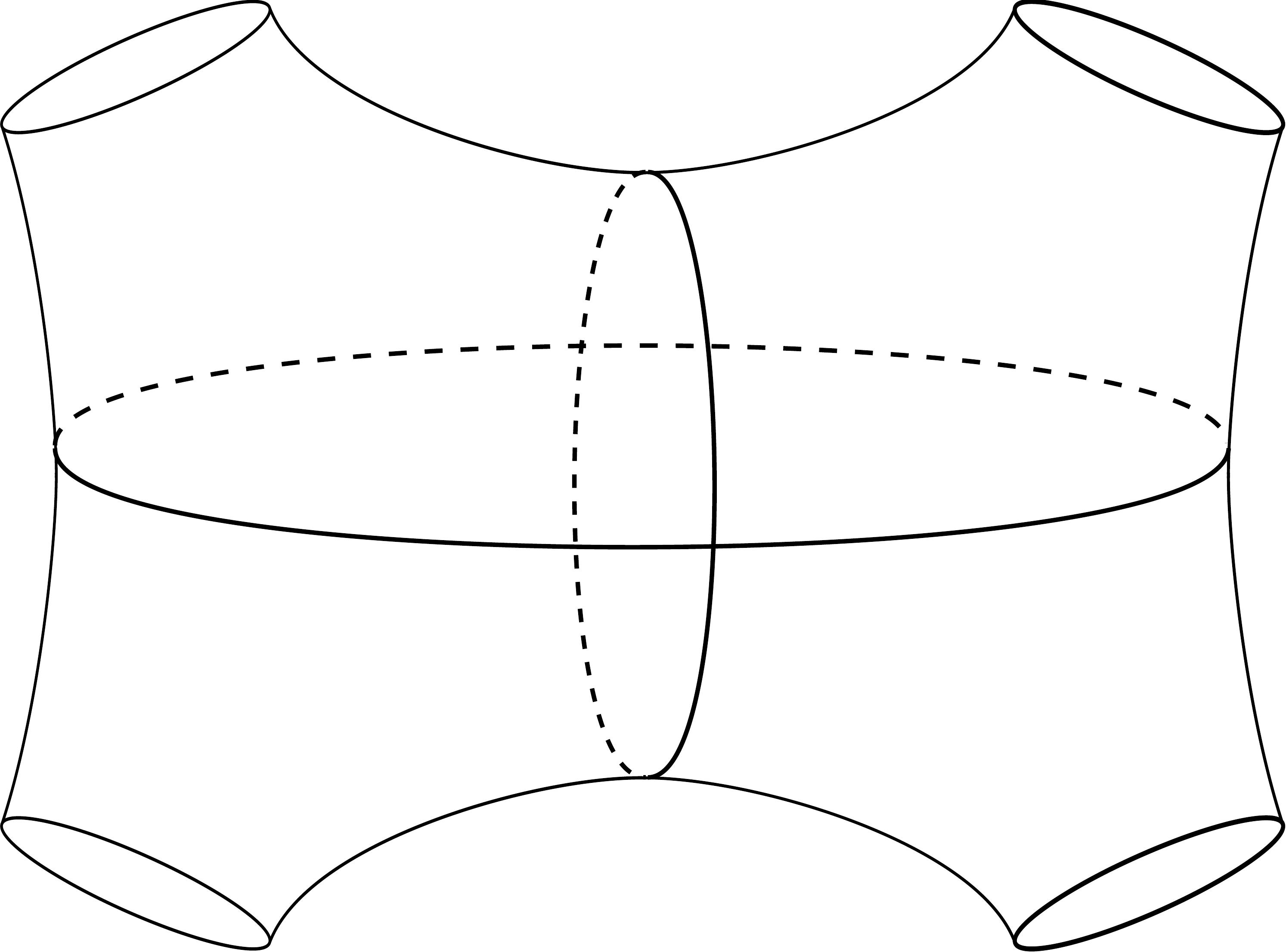,width=6.5cm,angle=0}}}
\end{center}
\caption{The surface $X$ determined by $\alpha$ and $\alpha'$} \label{fig:lemproof3}
\end{figure}

Note that the curve $\tilde{\delta}$ by definition shares a pair of pants with $\alpha$ in $P$ and with $\alpha'$ in $P'$.
Now, in $F$ the arc $\tilde{c}$, illustrated in Figure \ref{fig:lemproof4}, is an essential subarc of $\tilde{\delta}$. Now clearly
$$
d_C(\pi_C(\tilde{c}),\pi_C(a))=1;
$$
see again Figure \ref{fig:lemproof4}. As $\tilde{\delta} \in P\cap P'$, we can conclude that \ref{casei}) holds and thus the lemma is proved.
\end{proof}

\begin{figure}[h]
\leavevmode \SetLabels
\L(.41*.46) $\tilde{c}$\\
\L(.855*.7) $\tilde{c}$\\
\L(.64*.72) $a$\\
\L(.6*.35) $\pi(a)$\\
\L(.74*.47) $\pi(\tilde{c})$\\
\endSetLabels
\begin{center}
\AffixLabels{\centerline{\epsfig{file =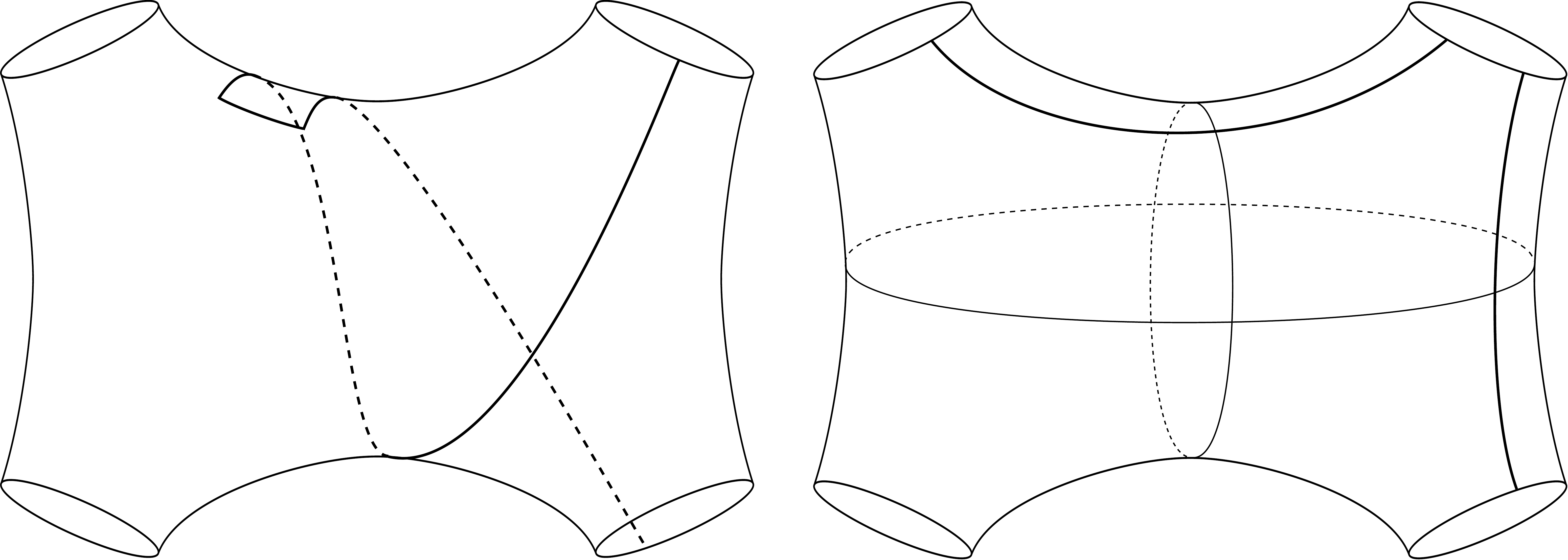,width=13.5cm,angle=0}}}
\end{center}
\caption{The surface $X$ determined by $\alpha$ and $\alpha'$} \label{fig:lemproof4}
\end{figure}

\subsection{Proof of Theorem \ref{thm:general}} 
We will prove the following more precise statement, which implies part (a) of Theorem \ref{thm:general}. Again, $d_C$ denotes distance in $\DP_C(S)$ (or, equivalently, in $\CCP_C(S)$, since both are isomorphic).

\begin{theorem}
Let $C\subset S$ be a deficiency 1 multicurve, and let $F$ be the unique (up to isotopy) incompressible surface of complexity 1 in $S\setminus C$. Let $P,Q \in \DP_C(S)$, and let $P=P_0,\hdots,P_n=Q$ be a path in $\DP(S)$ between them. If  $\partial F \not\subset P_k$ for at least one $k$,
then $n>d_C(P,Q)$.
\label{thm:themeat}
\end{theorem}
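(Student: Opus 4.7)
The plan is to project the given path $P=P_0,\ldots,P_n=Q$ in $\DP(S)$ to a path of length at most $n-1$ in $\DP_C(S)\cong\CP(F)$, which immediately yields $d_C(P,Q)\le n-1<n$. For each $i$ I would select a curve $\beta_i\subset F$ to represent the image of $P_i$: when $\partial F\subset P_i$, set $\beta_i=P_i\cap F$, the unique curve of $P_i$ inside $F$; when $\partial F\not\subset P_i$, some curve of $\partial F$ is absent from $P_i$, hence intersected by some $\alpha\in P_i$ by maximality, so $\alpha$ intersects $F$ essentially and one sets $\beta_i=\pi_C(a)$ for a chosen arc $a$ of $\alpha\cap F$. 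The selections of $\alpha$ and $a$ would be refined using Lemma \ref{lem:twocurves}. By construction $\beta_0=P\cap F$ and $\beta_n=Q\cap F$ are the vertices of $\CP(F)$ corresponding to $P$ and $Q$.

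Next, I would show that, with careful choices, $d_{\CP(F)}(\beta_i,\beta_{i+1})\le 1$ for every $i$. Between $P_i$ and $P_{i+1}$ there are $k$ pairwise disjoint elementary moves supported in complexity-$1$ subsurfaces $W_1,\ldots,W_k$. Because $F$ has complexity $1$ and the $W_j$ are pairwise disjoint, at most one $W_j$ can be contained in $F$; such a move contributes a single Farey edge between $\beta_i$ and $\beta_{i+1}$. Moves with $W_j$ disjoint from $F$ have no effect. For the remaining moves (those that straddle $\partial F$), and in the situations where both $P_i,P_{i+1}$ lie outside $\DP_C(S)$, I would appeal to Lemma \ref{lem:mainlemma} when $F$ is a four-holed sphere and Lemma \ref{lem:disjointT} when $F$ is a one-holed torus, systematically invoking the shared-curve alternative (i) of Lemma \ref{lem:mainlemma} in order to keep the per-step projection change at most $1$.

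The strict saving $n-1$ comes from analysing the first transition out of $\DP_C(S)$. Let $k$ be the smallest index with $\partial F\not\subset P_k$, so that $\partial F\subset P_{k-1}$. The elementary move responsible replaces some $\gamma\in\partial F$ by a curve $\gamma'$ inside a complexity-$1$ subsurface $W$ whose boundary is contained in $P_{k-1}\setminus\gamma$. Since $\beta_{k-1}=P_{k-1}\cap F$ is a curve of $P_{k-1}$ distinct from $\gamma$, it is either a boundary component of $W$ or disjoint from $W$, so $\gamma'\subset W$ is disjoint from $\beta_{k-1}$. Every arc $a$ of $\gamma'\cap F$ is therefore disjoint from $\beta_{k-1}$, and because $F$ has complexity $1$ the unique essential curve of $F$ disjoint from $a$ is $\beta_{k-1}$; hence $\pi_C(a)=\beta_{k-1}$. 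Setting $\beta_k=\pi_C(a)$ gives $\beta_k=\beta_{k-1}$, a free edge in the projected path, and delivers the required length bound $n-1$.

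The main obstacle I anticipate is ensuring that the projection choices can be made globally consistent so as to attain the per-step bound of $1$ in every case. Alternative (ii) of Lemma \ref{lem:mainlemma} yields only a bound of $2$, so whenever alternative (i) is unavailable I would need to either replace $\alpha$ by another curve of $P_{i+1}$ essentially intersecting $F$ (exploiting the shared-pants refinement in Lemma \ref{lem:twocurves}) or compensate via additional free steps arising at subsequent transitions in and out of $\DP_C(S)$. Juggling these choices uniformly for both the four-holed sphere and one-holed torus cases, while keeping the total length at most $n-1$, is the real technical content of the argument.
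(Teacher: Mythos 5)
Your overall strategy coincides with the paper's: project the path into $\CP(F)$ via $\pi_C$, harvest a free step at the transition out of the stratum (your computation that $\pi_C(a)=\beta_{k-1}$ there is correct and is exactly the paper's observation that $\pi_C(P_1)=P_0$), and control the remaining steps with Lemmas \ref{lem:twocurves}, \ref{lem:disjointT}, \ref{lem:disjoint} and \ref{lem:mainlemma}. The one-holed torus case goes through as you describe. But the four-holed sphere case contains a genuine gap at precisely the point you flag: your plan asserts a per-step bound $d_{\CP(F)}(\beta_i,\beta_{i+1})\le 1$ ``with careful choices,'' and this is simply not always achievable for a single step. The remark following Lemma \ref{lem:mainlemma} (Figure \ref{fig:lemproof1-2}) exhibits arcs of two curves related by one elementary move whose projections are at distance $3$; when alternative (ii) is the only one available, the tracked curve's projection can move by $2$ in one step, and no re-selection of the arc fixes this. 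Since the argument banks only one saved unit (the free step), even a single step of cost $2$ destroys the bound $n-1$ unless it is compensated.

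The missing idea is the two-step amortization the paper uses. When $\alpha_1^k\notin P_{k+1}$ and only the distance-$2$ alternative of Lemma \ref{lem:mainlemma} holds, one exploits the fact that the replacement curve $\alpha_1^{k+1}$ shares a pair of pants with $\alpha_2^k$ (the second curve from Lemma \ref{lem:twocurves}); hence these two curves cannot both be destroyed by the next elementary move, so at least one of them survives into $P_{k+2}$. One then skips a step: define $\tilde P_{k+2}$ directly as the projection of an arc of whichever of $\alpha_1^{k+1},\alpha_2^k$ lies in $P_{k+2}$, so that $d_C(\tilde P_k,\tilde P_{k+2})\le 2$ by Lemma \ref{lem:disjoint}, and insert an arbitrary intermediate vertex containing $\partial F$ if that distance equals $2$. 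The same device handles the case where $\alpha_1^{k+1}$ does not meet $F$ essentially (which your sketch does not address at all: the curve replacing $\alpha_1^k$ may exit $F$ entirely, leaving you with nothing to project at step $k+1$). Without this averaging over pairs of steps, your ``compensate via additional free steps arising at subsequent transitions'' has no content in the reduced case where the path leaves the stratum once and returns only at the end, so the proof as proposed does not close.
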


\begin{remark}
In fact, Theorem \ref{thm:themeat} also implies part (b) of Theorem \ref{thm:general}, since $d_{\CCP(S)}(P,Q) \ge d_{\DP}(S)(P,Q)$. 
\end{remark}

\begin{proof}[Proof of Theorem \ref{thm:themeat}]
Let $P=P_0,\hdots,P_n=Q$ be a path in  $\DP(S)$. It suffices to consider the case that $\partial F$ is not contained in $P_k$ for $k\notin \{0,n\}$. 
The proof relies on projecting to curves in $\pi_C(P_k)$ to produce a path  $\tilde{P}_0, \ldots, \tilde{P}_n$ in $\DP_C(S)$, in such a way that  $d(\tilde{P}_i,\tilde{P}_{i+1})\leq 1$. The strict inequality will come from the observation that, since $P_0$ contains $\partial F$ but $P_1$ does not, then $\pi_C(P_1)= P_0$.

We set $\tilde{P}_0:= P_0$ and $\tilde{P}_1:= \pi_C(P_1)=P_0$. Proceeding inductively, suppose that we have constructed $\tilde{P}_k$ by projecting the arc $a_k$ of the
curve $\alpha_1^k \in P_k$. Let $\alpha_2^k$ the curve of $P_k$ whose existence is guaranteed by Lemma \ref{lem:twocurves}. We begin with the easier case when $F$ is a one holed torus.\\


\noindent{\it \underline{The case where $F$ is a one holed torus}}\\

\noindent We proceed as follows for $k\leq n-1$.
At step $k+1$, at least $\alpha_1^k$ or $\alpha_2^k$ belong to $P_{k+1}$. We set $\tilde{P}_{k+1}$ to be the projection of any arc of such curve. By Lemma \ref{lem:disjointT} , we have $d_C(\tilde{P}_{k},\tilde{P}_{k+1}) \leq 1$.  Since $Q$ contains $\partial F$ but $P_{n-1}$ does not, then $\pi_C(P_{n-1})=Q$ and therefore $\tilde{P}_{n-1} = Q$. In this way we obtain the desired path, which has length at most $n-2$. \\

\noindent{\it  \underline{The case where $F$ is a four holed sphere}}\\

\noindent We proceed as follows for $k\leq n-2$. There are several cases to consider:

\begin{enumerate}[1.]
\item[{\bf 1.}] If $\alpha_1^k \in P_{k+1}$, then we define $\tilde{P}_{k+1}:=\tilde{P}_{k}$.
\end{enumerate}

 \noindent If $\alpha_1^k \notin P_{k+1}$, denote by $\alpha_1^{k+1}$ the curve in $P_{k+1}$ that intersects
 $\alpha_1^k $ essentially.

\begin{enumerate}[2.]
\item[{\bf 2.}] Suppose that $\alpha_1^{k+1}$ intersects $F$ essentially. As $\alpha_1^{k+1}$ and $\alpha_1^{k}$ are related by an elementary move, and since $\alpha_1^{k}$ shares a pair of pants with $\alpha_2^{k}$, it follows that $\alpha_2^{k}$ also belongs to $P_{k+1}$ and that it shares a pair of pants with $\alpha_1^{k+1}$.



\begin{enumerate}[i.]\item If $d(\pi_C(a_k), \pi_C(\alpha_1^{k+1}))>2$, then by Lemma \ref{lem:mainlemma}, there exists a curve $\tilde{\alpha}$ in $P_k\cap P_{k+1}$ (not necessarily $\alpha_2^{k}$) with an arc $\tilde{a}$ such that $d(\pi_C(a_k), \pi_C(\tilde{a}))=1$. In this case we set
$
\tilde{P}_{k+1}:= C \cup \pi_C(\tilde{a})
$, noting that $d(\tilde{P}_{k}, \tilde{P}_{k+1})=1$.

\item Otherwise we have $d(\pi_C(a_k), \pi_C(\alpha_1^{k+1}))\leq 2$. Now, at least one of $\alpha_1^{k+1}$ or $\alpha_2^{k}$ belongs to $P_{k+2}$. We define $\tilde{P}_{k+2}$ to be the projection of any arc of $\{ \alpha_1^{k+1}, \alpha_2^{k}\} \cap P_{k+2}$, observing that $d(\tilde{P}_{k}, \tilde{P}_{k+2})\le 2$, by the above observations and/or Lemma \ref{lem:disjoint}. (In the case that $d(\tilde{P}_{k}, \tilde{P}_{k+2})= 2$, we choose any $\tilde{P}_{k+1}$ that contains $\partial F$ and is adjacent to both $\tilde{P}_{k}$ and  $\tilde{P}_{k+2}$.)
\end{enumerate}

\end{enumerate}

\begin{enumerate}[1.]
\item[{\bf 3.}]  Finally, suppose that $\alpha_1^{k+1}$ does not essentially intersect $F$. By Lemma \ref{lem:twocurves}, $P_{k+1}$ contains a curve $\tilde{\alpha}$ which essentially intersects $F$ and which shares a pair of pants with $\alpha_2^k$. The curve $\tilde{\alpha}$ may or may not belong to $P_k$ but, in either case, observe that it is disjoint from $\alpha_1^k$. As such, by Lemma \ref{lem:disjoint} we have:
$$
d(\pi_C(a_k),\pi_C(\tilde{\alpha})) \leq 2.
$$
As before, at least $\tilde{\alpha}$ or  $\alpha_2^k$ belongs to $P_{k+1}$ so we choose any arc of $\{ \tilde{\alpha}, \alpha_2^k\} \cap P_{k+2}$ to produce $\tilde{P}_{k+2}$, which is at distance at most $2$ from $\tilde{P}_{k}$.\\
\end{enumerate}

This process will provide us with all $\tilde{P}_k$ up until either $k=n$ or $k=n-1$. We conclude by noticing that if $\tilde{P}_{n-1}$ was obtained as the projection of a curve in $P_{n-1}$ then, as in the initial step, $\tilde{P}_{n-1}=\pi_C(P_{n-1})=P_n$ and we set $\tilde{P_n}:=P_n$.
\end{proof}

Rephrasing Theorem \ref{thm:themeat}, we obtain the following nice property of geodesics in $\DP(S)$:

\begin{corollary}
Let $C\subset S$ be a deficiency 1 multicurve, and let $F$ be the unique (up to isotopy) incompressible surface of complexity 1 in $S\setminus C$. Let $P,Q \in \DP_C(S)$ and let $\omega$ be any geodesic from $P$ to $Q$. Then every vertex of $\omega$ contains $\partial F$.
\end{corollary}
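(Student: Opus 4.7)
The plan is to deduce the corollary directly from Theorem \ref{thm:themeat} by contrapositive, using also the convexity of $\DP_C(S)$ in $\DP(S)$ established in Theorem \ref{thm:general}(1).

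First I would fix some $P,Q \in \DP_C(S)$ and a geodesic $\omega\colon P = P_0, P_1, \ldots, P_n = Q$ in $\DP(S)$. The goal is to show $\partial F \subset P_k$ for every $k$. Note that since $|C| = \kappa(S) - 1$ and $F$ is an incompressible complexity-$1$ subsurface of $S\setminus C$, the boundary $\partial F$ is itself a subcollection of $C$; in particular, the endpoints $P_0$ and $P_n$ already contain $\partial F$, so only the interior vertices $P_1, \ldots, P_{n-1}$ require attention.

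Next I would compute the length of $\omega$. Since $\omega$ is a geodesic in $\DP(S)$, its length equals $d_{\DP(S)}(P,Q)$. By Theorem \ref{thm:general}(1) the stratum $\DP_C(S)$ is convex in $\DP(S)$, so
\[
n = d_{\DP(S)}(P,Q) = d_{\DP_C(S)}(P,Q) = d_C(P,Q).
\]

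Finally, suppose for contradiction that some vertex $P_k$ of $\omega$ fails to contain $\partial F$. Then Theorem \ref{thm:themeat} applies to the path $P_0, \ldots, P_n$ and yields the strict inequality $n > d_C(P,Q)$, which contradicts the equality displayed above. Hence every vertex of $\omega$ contains $\partial F$, as claimed.

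The proof is essentially a one-line deduction, so there is no real obstacle; the only subtlety worth pointing out explicitly is that one needs convexity of $\DP_C(S)$ (from Theorem \ref{thm:general}(1)) to identify the length of the ambient geodesic with the intrinsic distance $d_C(P,Q)$ appearing in the hypothesis of Theorem \ref{thm:themeat}.
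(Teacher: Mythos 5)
Your deduction is correct and is essentially the ``rephrasing'' of Theorem \ref{thm:themeat} that the paper intends (it gives no separate argument for this corollary). One minor simplification: you do not actually need the convexity from Theorem \ref{thm:general}(1), since $\DP_C(S)$ is a subgraph of $\DP(S)$ and hence $d_C(P,Q)\ge d_{\DP(S)}(P,Q)=n$ holds trivially, which already contradicts the conclusion $n>d_C(P,Q)$ of Theorem \ref{thm:themeat}.
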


\begin{remark}
{\rm Combining Theorem \ref{thm:general} with a similar argument to that in Remark \ref{rmk:outside} we obtain that, if $\kappa(S) \ge 3$, then $\DP_C(S)$ is not totally geodesic in $\DP(S)$.}
\label{rmk:outside3}
\end{remark}

\section{Consequences}

Let $S$ be a connected orientable surface of negative Euler characteristic and with empty boundary. Let $R=R(S)$ be the number $[\frac{3g+p-2}{2}]$ where $g$ and $p$ are, respectively, the genus and number of punctures of $S$. Note that $R$ is the maximum number of pairwise distinct, pairwise disjoint complexity 1 subsurfaces of $S$. As mentioned in the introduction, by work of Behrstock-Minsky \cite{bemi}, Brock-Farb \cite{brfa}, and Masur-Minsky \cite{MM2}, $R$ is precisely the {\em geometric rank} of the Weil-Petersson completion $\hat{T}(S)$ (and thus also of the pants graph $\CP(S)$, by Brock's result \cite{brock}).

We will say that a multicurve $D \subset S$ is {\em rank-realizing} if $S\setminus D$ contains $R$ pairwise distinct, pairwise disjoint incompressible subsurfaces of $S$, each of
complexity 1. We have:

\begin{proposition}
Let $D\subset S$ be a rank-realizing multicurve. Then $\DP_D(S)$ is convex in $\DP(S)$.
\label{prop:rank}
\end{proposition}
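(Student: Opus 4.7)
Since $D$ is rank-realizing, it has deficiency $R$ and $S \setminus D$ consists of the $R$ pairwise disjoint incompressible complexity-$1$ subsurfaces $F_1, \ldots, F_R$ together with a (possibly empty) collection of pairs of pants; in particular $\partial F_j \subset D$ for each $j$. Every pants decomposition containing $D$ then has the form $D \cup \alpha_1 \cup \cdots \cup \alpha_R$ with $\alpha_j \in \CP(F_j)$, and elementary moves in distinct $F_j$ are automatically disjoint in $S$ because the $F_j$ are disjoint. Consequently $\DP_D(S)$ is canonically isometric to the $\ell^\infty$-product $\CP(F_1) \times \cdots \times \CP(F_R)$; explicitly, $d_{\DP_D(S)}((\alpha_j),(\beta_j)) = \max_j d_{\CP(F_j)}(\alpha_j, \beta_j)$.

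The plan is to show that each factor $\CP(F_j)$ admits a length-non-increasing projection from $\DP(S)$, and then to reassemble. Fix $j$, let $P, Q \in \DP_D(S)$, and let $\omega = P_0, \ldots, P_n = Q$ be a geodesic in $\DP(S)$. I would segment $\omega$ into maximal subpaths whose interior vertices do not contain $\partial F_j$ and apply the projection construction from the proof of Theorem \ref{thm:themeat}, with $F = F_j$, to each such subpath. This produces a sequence $\tilde\alpha_{j,0}, \ldots, \tilde\alpha_{j,n}$ in $\CP(F_j)$ with $\tilde\alpha_{j,0} = P \cap F_j$, $\tilde\alpha_{j,n} = Q \cap F_j$, and $d_{\CP(F_j)}(\tilde\alpha_{j,i}, \tilde\alpha_{j,i+1}) \leq 1$. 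The key point is that Lemmas \ref{lem:disjointT}--\ref{lem:mainlemma} are purely local to the complexity-$1$ subsurface $F_j$, so the argument goes through verbatim; the correct endpoints are automatic because $\partial F_j \subset D$ is contained in both $P$ and $Q$.

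The reassembly step is to set $R_i := D \cup \tilde\alpha_{1,i} \cup \cdots \cup \tilde\alpha_{R,i}$. Each $R_i$ lies in $\DP_D(S)$, $R_0 = P$, and $R_n = Q$. For each $i$, the decompositions $R_i$ and $R_{i+1}$ agree on $D$ and on those $F_j$ with $\tilde\alpha_{j,i} = \tilde\alpha_{j,i+1}$, and differ on the remaining $F_j$ by a single elementary move each; since the $F_j$ are pairwise disjoint these are disjoint elementary moves in $S$, so $R_i$ and $R_{i+1}$ are $\DP$-adjacent (or equal). The resulting path has length at most $n$ in $\DP_D(S)$, yielding $d_{\DP_D(S)}(P, Q) \leq d_{\DP(S)}(P, Q)$, which is convexity.

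The main obstacle is the coordinate-wise projection step: Theorem \ref{thm:themeat} was originally stated for endpoints already lying in a single deficiency-$1$ stratum, and one must verify that its construction adapts to run on each maximal subpath of $\omega$ separately. The check reduces to observing that the ``skip-step'' vertices inserted in cases 2.ii and 3 of the proof of Theorem \ref{thm:themeat} ensure that the construction produces exactly $n+1$ terms per subpath, so that the projections for the different $F_j$ can be synchronized in time and reassembled consistently into a single path in $\DP_D(S)$.
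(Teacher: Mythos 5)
Your proof is correct and follows essentially the same route as the paper: project the ambient path to each complexity-$1$ piece $F_j$ via the construction of Theorem \ref{thm:themeat}, then reassemble coordinate-wise using the fact that elementary moves in the disjoint $F_j$ are disjoint in $S$. The paper handles the synchronization issue you raise more crudely, by padding the shorter projected paths with repeated vertices at the end, but this is a cosmetic difference.
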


\begin{proof}
Let $X_1,\ldots, X_R$ be the $R$ complexity 1 incompressible subsurfaces in $S\setminus D$. Let $P,Q$ be any two vertices of $\DP_C(S)$, and denote by $\alpha_i$ (resp. $\beta_i$) the unique curve in $P$ (resp. $Q$) that is an essential curve in $X_i$. Consider any path $\omega$ between $P$ and $Q$. As  in the proof
of Theorem \ref{thm:general}, we may project $\omega$ to a path $\omega_i$ in $\DP(X_i)$ from $\alpha_i$ to $\beta_i$, such that ${\rm length}(\omega_i)\le {\rm length}(\omega)$ for all $i$. In fact, by repeating vertices at the end of some of the $\omega_i$ if necessary, we may assume that all the $\omega_i$ have precisely $N+1$ vertices, with $N \le {\rm length}(\omega)$. 

Now, if $\omega_{i,j}$ denotes the $j$-th vertex along $\omega_i$, then $P_j=\omega_{1,j} \cup  \ldots, \omega_{R,j} \cup D$ is a vertex of $\DP_D(S)$. In this way, we obtain
a path $P=P_0, \ldots, P_N=Q$ that is entirely contained in $\DP_D(S)$ and has length $N\le {\rm length}(\omega)$, as desired. 
\end{proof}

As a consequence, we obtain our first promised corollary. Here, $\DZ^r$ denotes the graph obtained from the cubical lattice $\BZ^r$  by adding
the diagonals to every $k$-dimensional cube of $\BZ^r$, for $k\le r$. 


\begin{named}{Corollary \ref{cor:punctures}}
There exists an isometric embedding $\DZ^r \to \DP(S)$ if and only if $r \le R$.
\end{named}

\begin{proof}
We exhibit an isometric embedding of $\DZ^R$, as the construction for $r\le R$ is totally analogous. Let $D$ be a rank-realizing multicurve, and let $X_1,\ldots, X_R$ be the $R$ incompressible subsurfaces of complexity 1 in $S\setminus D$. For $i=1, \ldots, R$, choose
a bi-infinite geodesic $\omega_i$ in $\DP(X_i)$. Let $\omega_{i,j}$ denote the $j$-th vertex along $\omega_i$. Then subgraph of $\DP(S)$ spanned by 
$\{\omega_{i,j}\cup D: 1\le i\le R, j\in \BZ\}$ is isomorphic to $\DZ^R$, and is convex in $\DP(S)$ by Proposition \ref{prop:rank}.

Conversely, every $k$-dimensional cube in $\DZ^r$ singles out a unique (up to isotopy) collection of $k$ pairwise distinct, pairwise disjoint incompressible subsurfaces of $S$, each of complexity 1. Thus the result follows.
\end{proof}

Theorem \ref{thm:general} also implies the following:

\begin{named}{Corollary \ref{cor:far}}
 Given any surface $S$, and any $k$, there exist points in $\DP(S)$ at distance $k$ with only a finite number of geodesics between them.
\end{named}

\begin{proof}
Let $D$ be a rank-realizing multicurve, and let $X_1,\ldots, X_R$ be the $R$ incompressible subsurfaces of complexity 1 in $S\setminus D$. Let $P\in \DP_D(S)$ and let $\alpha_i$ be the unique curve of $P$ that essentially intersects $X_i$. For all $i=1, \ldots, R$, choose a geodesic ray $\omega_i$ in $\CP(X_i)$ issued from $\alpha_i$, and denote by $\omega_{i,j}$ the $j$-th vertex along $\omega_i$. Let $Q_k= \omega_{1,k} \cup  \ldots, \omega_{R,k} \cup D$. By Proposition \ref{prop:rank}, $d_{\DP(S)}(P, Q_k) = k$ for all $k$.

By Theorem \ref{thm:themeat}, any geodesic path in $\DP(S)$ between $P$ and $Q_k$ is entirely contained in $DP_D(S)$. Furthermore, it follows from the choice of $Q_k$ that the projection to $\DP(X_i)$ of a geodesic path in $\DP(S)$ between $P$ and $Q_k$ is a geodesic path in $\DP(X_i)$ between
$\alpha_i$ and $\omega_{i,k}$. The result now follows because $\DP(X_i)$ is isomorphic to a Farey graph, and there are only finitely many geodesic paths between any two points of a Farey graph.
\end{proof}

\section*{Appendix: Strong convexity vs. finite geodesicity}\label{cuicui}

In this section, we prove:



\begin{proposition}
Let $S$ be a connected orientable surface of negative Euler characteristic. We denote by $\CC(S)$ either $\CP(S)$ or $\DP(S)$ or $\CCP(S)$. The following two properties are equivalent:
\begin{enumerate}
\item Given a simple closed curve $\alpha\subset S$, the subset $\CC_\alpha(S)$ of $\CC(S)$ spanned by the vertices corresponding to pants decompositions containing $\alpha$ is totally geodesic,
\item Given two vertices $P,Q\in \CC(S)$, there are finitely many geodesic segments joining $P$ to $Q$ in $\CC(S)$.
\end{enumerate}
\label{prop:cyrilito}
\end{proposition}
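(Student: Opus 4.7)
The plan is to treat the two implications separately: one via Dehn twist symmetry, the other by induction on surface complexity.

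For $(2) \Rightarrow (1)$: I argue the contrapositive via Dehn twists. Suppose $\CC_\alpha(S)$ fails to be totally geodesic, so there exist $P, Q \in \CC_\alpha(S)$ and a $\CC(S)$-geodesic $\gamma : P = P_0, P_1, \ldots, P_n = Q$ with some vertex $P_i \notin \CC_\alpha(S)$, i.e., $\alpha \notin P_i$. The Dehn twist $T_\alpha$ along $\alpha$ induces an isometry of $\CC(S)$ in all three conventions, since it preserves intersection numbers and hence the edge relations (and edge lengths) defining $\CP(S)$, $\DP(S)$, and $\CCP(S)$. Moreover $T_\alpha$ fixes every vertex of $\CC_\alpha(S)$ pointwise: any pants decomposition containing $\alpha$ consists of $\alpha$ together with curves disjoint from $\alpha$, on which $T_\alpha$ acts trivially. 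Consequently $T_\alpha^k(\gamma)$ is a geodesic from $P$ to $Q$ for every $k \in \BZ$. Since $P_i$ contains some curve $\beta$ with $i(\alpha, \beta) > 0$, the orbit $\{T_\alpha^k(\beta)\}_{k \in \BZ}$ is infinite, hence $\{T_\alpha^k(P_i)\}$ is an infinite collection of distinct vertices. This yields infinitely many distinct $P$-to-$Q$ geodesics, contradicting (2).

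For $(1) \Rightarrow (2)$: I plan to proceed by strong induction on the complexity $\kappa(S)$. The base case $\kappa(S) = 1$ is classical: $\CC(S)$ coincides in all three conventions with the Farey graph, which is known to admit only finitely many geodesics between any two vertices. For the inductive step, assume the implication for all surfaces of strictly smaller complexity and assume (1) on $S$. Since intersections of totally geodesic subgraphs are totally geodesic, condition (1) propagates: for every multicurve $C \subset S$, the stratum $\CC_C(S)$ is totally geodesic in $\CC(S)$.

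Given $P, Q \in \CC(S)$: if $P \cap Q \neq \emptyset$, choose $\alpha \in P \cap Q$; then every geodesic from $P$ to $Q$ lies in $\CC_\alpha(S)$, which is built from the graphs $\CC(X_i)$ of the components $X_i$ of $S \setminus \alpha$, each of complexity strictly less than $\kappa(S)$. The inductive hypothesis gives finite geodesicity in each factor, and a routine argument tailored to each of $\CP$ (an $\ell^1$-type product of graphs), $\DP$, and $\CCP$ shows that finite geodesicity is inherited by the stratum $\CC_\alpha(S)$, hence that only finitely many geodesics connect $P$ and $Q$ in $\CC(S)$.

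The main obstacle, and the step I expect to require the most care, is the case $P \cap Q = \emptyset$, since then $\CC_{P \cap Q}(S) = \CC(S)$ offers no direct reduction in complexity. For this case I would perform a secondary induction on $d(P, Q)$: the critical step is to show that only finitely many vertices $P'$ adjacent to $P$ can initiate a geodesic to $Q$. Each such $P'$ is obtained by an elementary move (or disjoint moves, in $\DP$ and $\CCP$) in some complexity-one subsurface $F_\alpha \subset S \setminus (P \setminus \alpha)$, and the constraints $d(P', Q) = d(P, Q) - 1$, together with (1) and subsurface-projection considerations relating the new curve to $\pi_{F_\alpha}(Q)$, should restrict the possible choices to a finite set. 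Once finiteness of first steps is established, applying the secondary inductive hypothesis to each admissible pair $(P', Q)$ completes the count of geodesics from $P$ to $Q$.
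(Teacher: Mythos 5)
Your $(2)\Rightarrow(1)$ direction is correct and is exactly the paper's argument: $T_\alpha$ is an isometry fixing $\CC_\alpha(S)$ pointwise, any vertex $P_i$ of the offending geodesic that omits $\alpha$ must contain a curve $\beta$ with $i(\alpha,\beta)>0$ (by maximality of pants decompositions), and the twist orbit of the geodesic is therefore infinite. No complaints there.

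The $(1)\Rightarrow(2)$ direction is where the substance of the proposition lies, and your argument has a genuine gap precisely at the point you flag yourself: the case $P\cap Q=\emptyset$. Everything before that is plausible (intersections of totally geodesic subgraphs are totally geodesic, so condition (1) passes to multicurve strata and, via the product structure of $\CC_\alpha(S)$, to the complementary pieces; finite geodesicity in an $\ell^1$, $\ell^\infty$ or $\ell^2$ product with boundedly many steps in each factor follows from finite geodesicity of the factors). But when $P$ and $Q$ share no curve, the claim that ``only finitely many vertices $P'$ adjacent to $P$ can initiate a geodesic to $Q$'' is exactly the statement that needs proving, and hypothesis (1) gives you no direct purchase on it: the curve $\alpha\in P$ being replaced at the first step does not lie in $Q$, so the infinitely many candidate geodesics leaving $\CC_\alpha(S)$ at step one do not by themselves contradict total geodesicity of any stratum. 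To extract a contradiction from an infinite family of geodesics one must locate a curve that is carried by the family at two separated indices but dropped in between, and that is a nontrivial limiting statement, not a consequence of $d(P',Q)=d(P,Q)-1$ plus subsurface projections. The paper's proof does precisely this, and entirely bypasses your induction: given infinitely many geodesics $\omega_n$ from $P$ to $Q$, it passes to subsequences so that the $\omega_n$ agree up to some first index $i_0-1$ and diverge at $i_0$, takes Hausdorff limits $\omega_{\infty,i}$ of the vertices viewed as geodesic laminations, finds a curve $\alpha\subset\omega_{n,i_0-1}$ towards which leaves of $\omega_{\infty,i_0}$ spiral, and uses the uniform bound $i(\omega_{n,i},\omega_{n,i+1})\le 3g+p-2$ to propagate $\alpha$ through the limits until some index $i_1$ where $\alpha$ is an isolated leaf, hence contained in $\omega_{n,i_1}$ for large $n$. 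The subsegment from $\omega_{n,i_0-1}$ to $\omega_{n,i_1}$ then violates total geodesicity of $\CC_\alpha(S)$. Unless you can supply the finiteness-of-first-steps claim with an actual argument (and I do not see one that avoids some such compactness/limit device), this direction of your proof is incomplete.
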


\begin{proof}
Let us first show that $1)\Rightarrow 2)$. More precisely, we will show that  $[\text{not }2)]\Rightarrow  [\text{not } 1)]$. Consider an infinite family $\{\omega_n,n\in\BN\}$ of distinct geodesic arcs joining two points $P,Q\in\CC(S)$ and denote the vertices of $\omega_n$ by $\omega_{n,i}$, $0\leq i\leq q(n)$, where $d_{\CC(S)}(\omega_{n,i-1},\omega_{n,i})=1$ when $\CC(S)=\CP(S)$ or $\DP(S)$ and $d_{\CC(S)}(\omega_{n,i-1},\omega_{n,i})= \sqrt{k}$, for some integer  $k \le [\frac{3g+p-2}{2}]$, when $\CC(S)=\CCP(S)$. Notice that $q(n)=d_{\CC(S)}(P,Q)$ if $\CC(S)=\CP(S)$ or $\DP(S)$ but $q(n)\leq d_{\CC(S)}(P,Q)$ may depend on $n$ for $\CC(S)=\CCP(S)$. In the latter case, we first extract a subsequence such that $q(n)$ does not depend on $n$. Let $0<i_0<q(n)=q$ be the smallest index such that $\{\omega_{n,i_0}\}$ contains infinitely many distinct points. Let us extract a subsequence such that $\omega_{n,i}$ does 
not depend on $n$ for $i<i_0$ and $\{\omega_{n,i_0}\}\neq \{\omega_{m,i_0}\}$ for any $n\neq m$. Extract a further subsequence such that the set of leaves of $\omega_{n,i_0-1}$ that are not leaves of $\omega_{n,i_0}$ does not depend on $n$ (namely we fix the subsurfaces in which the elementary moves happen).  Extract a subsequence one last time so that for any $i$, $\omega_{n,i}$ converges in the Hausdorff topology to a geodesic lamination $\omega_{\infty,i}$.\\
\indent
Since all the pants decompositions $\{\omega_{n,i_0}\}$ are distinct, there is a leaf $\alpha$ of $\{\omega_{n,i_0-1}\}$ which is not a leaf of any $\{\omega_{n,i_0}\}$ and such that the leaves $\alpha_n$ of the pants decompositions $\{\omega_{n,i_0}\}$ that intersect $\alpha$ form an infinite family. The lamination $\omega_{\infty,i_0}$ contains $\alpha$ and some leaves spiraling towards $\alpha$. Since $\omega_{n,i_0}$ and $\omega_{n,i_0+1}$ are adjacent vertices, we have $i(\omega_{n,i_0},\omega_{n,i_0+1}) \le 3g+p-2$ for any $n$. It follows that $\omega_{\infty,i_0+1}$ also contains $\alpha$. If, furthermore, $\omega_{\infty, i_0+1}$ contains some leaves spiraling towards $\alpha$, then $\omega_{\infty, i_0+2}$ contains $\alpha$, and the same holds for $\omega_{\infty, i_0+3}$, etc. On the other hand, $\omega_{\infty,q}=Q$ does not contain any leaves spiraling towards $\alpha$. It follows that there is $i_1>i_0$ such that $\omega_{\infty,i_1}$ contains $\alpha$ as an isolated leaf. Then for, $n$ large enough, $\omega_{n,i_1}$ contains $\alpha$, i.e. $\omega_{n,i_1}\subset \CC_\alpha(S)$. Since $\omega_{n,i_0-1}\subset \CC_\alpha(S)$ and $\omega_{n,i_0}\not\subset \CC_\alpha(S)$, the geodesic segment joining $\omega_{n,i_0-1}$ to $\omega_{n,i_1}$ and passing through $\{\omega_{n,i},i_0\leq i\leq i_1-1\}$  is not contained in $\CC_\alpha(S)$. Thus we have proved that $2)$ is not satisfied.\\

\indent
For the other direction, $2)\Rightarrow 1)$, we will show $[\text{not }1)]\Rightarrow  [\text{not } 2)]$. So assume that there is simple closed curve $\alpha\subset S$ such that $\CC_\alpha(S)$ is not totally geodesic. In particular there is a geodesic segment $\omega$ joining two vertices $P,Q\in \CC_\alpha(S)$ such that $\omega\not\subset\CC_\alpha(S)$. Let $T_\alpha:\CC(S)\rightarrow \CP(S)$ be the automorphism induced by the right Dehn twist along $\alpha$. Then $\CC_\alpha(S)$ is exactly the set of points fixed by $T_\alpha$. Since $\omega\not\subset\CC_\alpha(S)$, $\{T_\alpha^n(k),k\in\BZ\}$ is an infinite family of pairwise distinct geodesic segments joining $P$ to $Q$. It follows that $1)$ is not satisfied.
\end{proof}

Combining Proposition \ref{prop:cyrilito}, Theorem \ref{thm:punctures}, and the main results of \cite{APS1,APS2} we obtain the last promised consequence.

\begin{named}{Corollary \ref{geodesics}}
Let $S$ be the six-times punctured sphere. For any $P,Q \in \CP(S)$, there are only finitely many geodesics in $\CP(S)$ between $P$ and $Q$. $\square$
\end{named}

Observe that in the proof above we could have replaced "simple closed curve" by "multi-curve" in $1)$. This, together with Remark \ref{rmk:outside}, implies that the analog of Corollary \ref{geodesics} for the diagonal pants graph is not true.

\end{document}